\apptocmd{\sloppy}{\hbadness 10000\relax}{}{}
\newtheorem{theorem}{Theorem}[section]
\newtheorem{lemma}[theorem]{Lemma}
\newtheorem{proposition}[theorem]{Proposition}
\theoremstyle{definition}
\newtheorem{definition}[theorem]{Definition}
\theoremstyle{remark}
\newtheorem{remark}[theorem]{Remark}
\newcommand{\CC}{\mathbb{C}}
\newcommand{\NN}{\mathbb{N}}
\newcommand{\id}{\mathop{\mathrm{id}}}
\newcommand{\diff}[2]{\mathrm{d}_{ #1 } #2 }
\newcommand{\diag}[1]{\mathop{\mathrm{diag}\left(#1\right)} }
\newcommand{\ball}[1]{ B_{ #1 } }
\newcommand{\poly}[1]{ \Delta_{ #1 } }
\newcommand{\norm}[1]{ \left\lVert#1\right\rVert }
\newcommand{\abs}[1]{ \left\lvert#1\right\rvert }
\newcommand{\mat}[3]{ #1^{#2\times #3} }
\newcommand{\homogp}[1]{\mathcal{P}^{#1}}
\newcommand{\specialp}[2]{\mathcal{X}^{#1}_{#2}}
\newcommand{\landau}[1]{O\!\left(\abs{\cdot}^{#1}\right)}
\newcommand{\multii}[1]{Q\!\left(#1\right)}
\newcommand{\multiisq}[1]{Q^2\!\left(#1\right)}
\author{Rafael~B.~Andrist}
\address{Rafael~B.~Andrist \\ School of Mathematics and Natural Sciences \\ University of Wuppertal \\ Germany}
\email{rafael.andrist@math.uni-wuppertal.de}
\author{Gerrit~Maus}
\address{Gerrit~Maus \\ School of Mathematics and Natural Sciences \\ University of Wuppertal \\ Germany}
\email{maus@math.uni-wuppertal.de}
\title[Attracting sequences of holomorphic automorphisms]{Attracting sequences of holomorphic automorphisms that agree to a certain order}
\subjclass{32H50, 32H02}
\keywords{basins of attraction, complex dynamics, Fatou--Bieberbach domain}
\begin{document}

\begin{abstract}
The basin of attraction of a uniformly attracting sequence of holomorphic automorphisms that agree to a certain order in the common fixed point, is biholomorphic to $\mathbb{C}^n$. We also give sufficient estimates how large this order has to be.
\end{abstract}

\maketitle

\section{Introduction}

The systematic study of basins of attraction of holomorphic automorphisms goes back to works of Sternberg \cite{Sternberg} and Dixon and Esterle \cite{DixonEsterle}. The first complete result\footnote{Rosay and Rudin \cite{RosayRudin} remarked that the result of Dixon and Esterle \cite{DixonEsterle} relies on a statement of Reich \cite{Reich} which in turn had a gap in its proof. The earlier result of Sternberg \cite{Sternberg} deals only with an automorphism whose differential in the attracting fixed point is diagonal and has no special elements -- for the definition of special elements, see Section \ref{sc:proofs}.} was obtained by Rosay and Rudin \cite{RosayRudin} who showed that the basin of attraction of a holomorphic automorphism with an attracting fixed point is always biholomorphic to $\CC^n$. The question can be generalized to sequences of holomorphic automorphisms $f_j \colon \CC^n \to \CC^n$ with a common attracting fixed point $z_0 \in \CC^n$.

\begin{definition}
Let $f_j \colon \CC^n \to \CC^n$, $j \in \NN$, be a sequence of holomorphic self-maps. Their \emph{basin of attraction in $z_0 \in \CC^n$} is defined to be
\[
\Omega^{z_0}_{f_j} := \left\{ z \in \CC^n \,:\, \lim_{j \to \infty} f_j \circ f_{j-1} \circ \dots \circ f_1(z) = z_0 \right\}
\]
\end{definition}

A counterexample by Forn{\ae}ss \cite{Fornaess} shows that this basin of attraction of holomorphic automorphisms with a common fixed point $z_0$ does in general not need to be biholomorphic to $\CC^n$.

\smallskip

Therefore the question whether this basin of attraction is biholomorphic to $\CC^n$ is usually considered for automorphisms that satisfy the following uniform attraction property:

\begin{definition}
We call a sequence of holomorphic automorphisms $f_j \colon \CC^n \to \CC^n$, $j \in \NN$, \emph{uniformly attracting} in a point $z_0 \in \CC^n$, if there exist real numbers $0 < r, s < 1$ and $\delta > 0$ such that
\begin{equation*}
\forall j \in \NN \, \forall z \in \ball{\delta} \quad s \| z - z_0 \| < \| f_j(z) - z_0 \| < r \| z - z_0 \|
\end{equation*}
where $\ball{\delta} := \{ z \in \CC^n \,:\, \|z\| < \delta \}$.
\end{definition}

It was shown by Forn{\ae}ss and Stens{\o}nes \cite{FornaessStensones} that if $\Omega_{f_j}^{z_0}$ is biholomorphic to $\CC^n$ for any sequence of uniformly attracting holomorphic automorphisms, then this would give a positive answer to the stable manifold conjecture of Bedford. Their result has drawn a lot of interest and several positive partial results have been obtained so far. In particular we want to mention a result of Wold \cite{WoldFB}*{Theorem 4} that has been generalized by Sabiini and then further improved by Peters and Smit \cite{PetersSmit}.

\begin{theorem}\cites{SabiiniThesis,Sabiini}
\label{th:sab}
Let $0 < r, s < 1$, $0 < \delta$, and let $p \in \NN$ such that $r^p < s$.
Then for any uniformly attracting sequence of holomorphic automorphisms $f_j \colon \CC^n \to \CC^n$ with
\begin{equation*}
\forall j \in \NN \, \forall z \in \ball{\delta} \quad s \| z - z_0 \| < \| f_j(z) - z_0 \| < r \| z - z_0 \|
\end{equation*}
such that
\begin{equation}
\label{eq:sab}
\frac{\partial^{|\alpha|} f_j}{\partial z^\alpha}(z_0) = 0 \text{ for all multi-indices } \alpha \in \NN_0^n \text{ with } 2 \leq |\alpha| \leq p-1
\end{equation}
the basin of attraction is biholomorphic to $\CC^n$.
\end{theorem}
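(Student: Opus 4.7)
The plan is to construct an explicit biholomorphism from the basin to $\CC^n$ as a locally uniform limit of rescaled orbits. After translating we may assume $z_0 = 0$; write $f_j(z) = A_j z + R_j(z)$ with $A_j := \diff{0}{f_j}$, so that the hypothesis \eqref{eq:sab} reads $R_j(z) = \landau{p}$. Differentiating the uniform attraction inequality at $0$ gives $\|A_j\| \leq r$ and $\|A_j^{-1}\| \leq 1/s$ for every $j$. Since $\{f_j\}$ is uniformly bounded on $\ball{\delta}$, Cauchy estimates (after possibly shrinking $\delta$) yield a uniform constant $C > 0$ such that $\|R_j(z)\| \leq C\|z\|^p$ for all $z \in \ball{\delta}$ and all $j \in \NN$.

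Set $F_N := f_N \circ \cdots \circ f_1$, $L_N := A_N \cdots A_1$, and $\Phi_N := L_N^{-1} \circ F_N$. Each $\Phi_N$ is a holomorphic automorphism of $\CC^n$ fixing $0$ with $\diff{0}{\Phi_N} = \id$. Using $L_{N+1}^{-1} A_{N+1} = L_N^{-1}$ one immediately obtains the telescoping identity
\[
\Phi_{N+1}(z) - \Phi_N(z) = L_{N+1}^{-1} R_{N+1}\bigl(F_N(z)\bigr).
\]
For a compact $K \subset \Omega^0_{f_j}$ the orbit eventually enters $\ball{\delta}$ and contracts by the factor $r$ thereafter, so $\|F_N(z)\| \leq C_K r^N$ on $K$ for large $N$. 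Combined with $\|L_{N+1}^{-1}\| \leq s^{-(N+1)}$ and the bound on $R_{N+1}$, this gives
\[
\|\Phi_{N+1}(z) - \Phi_N(z)\| \leq C'_K \bigl(r^p / s\bigr)^N \quad \text{on } K,
\]
which is summable since $r^p < s$. Hence $\Phi_N$ converges locally uniformly on $\Omega^0_{f_j}$ to a holomorphic map $\Phi$ with $\diff{0}{\Phi} = \id$, and a Hurwitz-type theorem for locally uniform limits of biholomorphisms forces $\Phi$ to be injective, hence a biholomorphism onto an open subset of $\CC^n$.

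It remains to prove $\Phi(\Omega^0_{f_j}) = \CC^n$. Evaluating the telescoping identity at $z = \Phi_N^{-1}(w)$ and using $F_N\bigl(\Phi_N^{-1}(w)\bigr) = L_N w$ yields
\[
\Phi_{N+1}\bigl(\Phi_N^{-1}(w)\bigr) - w = L_{N+1}^{-1} R_{N+1}(L_N w),
\]
whose norm is bounded by $(C/s)(r^p/s)^N \|w\|^p$, summable in $N$ and locally uniform in $w \in \CC^n$. From this one concludes, via an inductive inverse-function argument, that $\Phi_N^{-1}$ converges locally uniformly on $\CC^n$ to a holomorphic map $\Psi \colon \CC^n \to \Omega^0_{f_j}$ inverse to $\Phi$. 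The main obstacle is precisely this last step: the control on $\Phi_{N+1}\bigl(\Phi_N^{-1}(w)\bigr) - w$ translates into convergence of $\Phi_N^{-1}(w)$ itself only after a lower bound on $\diff{\Phi_N^{-1}(w)}{\Phi_{N+1}}$ is secured. The key observation is that $F_N\bigl(\Phi_N^{-1}(w)\bigr) = L_N w \to 0$, so along the relevant orbit the nonlinear contributions of the $f_j$'s are controlled by $\|L_N w\|^{p-1}$; an induction then simultaneously bounds $\Phi_N^{-1}(w)$ in a compact region of $\CC^n$ and keeps $\diff{\Phi_N^{-1}(w)}{\Phi_{N+1}}$ close to the identity, which closes the surjectivity argument.
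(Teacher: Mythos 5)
Your construction is, in fact, identical to the paper's in this special case: the paper proves Theorem~\ref{th:sab} by invoking Proposition~\ref{pp:rr_biholo} with $T_j := \id$ and $G_j := A_j$, so that the paper's $\psi_N = G_{1,N}^{-1}\circ T_N\circ f_{1,N}$ is precisely your $\Phi_N = L_N^{-1}\circ F_N$, and the telescoping bound $\|\Phi_{N+1}-\Phi_N\|\lesssim (r^p/s)^N$ on compacts of $\Omega$ is precisely the Cauchy estimate \eqref{eq:rra_rra_cs} specialised to this data (with $q=p$, $\gamma=1/s$). Your convergence and injectivity arguments therefore run parallel to the paper's and are fine.

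Where you genuinely diverge, and where the gap lies, is surjectivity. You attempt to show that $\Phi_N^{-1}(w)$ converges directly, via the identity $\Phi_{N+1}(\Phi_N^{-1}(w))-w=L_{N+1}^{-1}R_{N+1}(L_Nw)$ and an ``inductive inverse-function argument.'' The first-order estimate you rely on is correct: writing $w_N:=\Phi_N^{-1}(w)$, one does get $D\Phi_{N+1}(w_N)=\bigl(I+L_N^{-1}A_{N+1}^{-1}DR_{N+1}(L_Nw)L_N\bigr)D\Phi_N(w_N)$ with the perturbation of size $O\bigl((r^p/s)^N\bigr)$, which is summable. But to convert this into convergence of $w_N$ you need to compare $D\Phi_N(w_N)$ with $D\Phi_N(w_{N-1})$, i.e.\ you need a bound on $D^2\Phi_N$ near $w_{N-1}$ together with a bound on $\|w_N-w_{N-1}\|$; and $\|w_N-w_{N-1}\|$ itself is obtained by applying $\Phi_N^{-1}$, which requires the very derivative bound you are trying to establish. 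The only way to break this circle is to know a priori that $\{w_N\}$ stays inside a \emph{fixed compact subset of $\Omega$} (not merely of $\CC^n$), so that the locally uniform boundedness of $\{\Phi_N\}$ on $\Omega$ gives uniform Cauchy estimates for $D^2\Phi_N$ near the orbit. Your sketch never establishes this: the control you cite, namely $F_N(w_N)=L_Nw\to 0$, only gives information about the orbit at time $N$, whereas the forward orbit $F_k(w_N)$ for $k<N$ can leave $\ball{\delta}$, and there is no a priori reason $w_N$ cannot drift towards $\partial\Omega$. This is exactly the point that the Rosay--Rudin machinery in Proposition~\ref{pp:rr_biholo} is designed to circumvent: instead of inverting $\Phi_N$ globally, the paper introduces the ``restarted'' maps $\psi_j^l = G_{l+1,j}^{-1}\circ T_j\circ f_{l+1,j}$ on $\ball{\delta}$, shows by the same Cauchy estimate that they are uniformly bounded there, invokes Montel and Hurwitz to obtain a fixed $\varepsilon>0$ with $\ball{\varepsilon}\subset\psi^l(\overline{\ball{\hat\delta}})$ for all large $l$, and then for any $M$ chooses $l$ so that $G_{1,l}(\ball{M})\subset\ball{\varepsilon}$ to conclude $\ball{M}\subset\psi(\Omega)$. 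If you want to keep your direct route, you need to supply the missing compactness argument for $\{w_N\}$ inside $\Omega$; otherwise you should fall back on the Montel/restart argument.
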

This contains the result of Wold \cite{WoldFB}*{Theorem 4} for $p=2$ where the condition \eqref{eq:sab} is empty. Recently, this condition was further improved in dimension $n=2$ by Peters and Smit \cite{PetersSmit} using the method of so-called adaptive trains.

\smallskip

Another positive result was obtained by Peters \cite{Peters} when all the uniformly attracting automorphisms $f_j$ are uniformly close to a given automorphism:
\begin{theorem}[\cite{Peters}]
\label{th:peters}
Given a holomorphic automorphism $f_1 \colon \CC^n \to \CC^n$ with $0$ as attractive fixed point there exists $\varepsilon > 0$ such that for any sequence of holomorphic automorphisms $f_j \colon \CC^n \to \CC^n$ fixing $0$ and satisfying $\| f_1 - f_j \|_{B_1} < \varepsilon$, the basin of attraction is biholomorphic to $\CC^n$.
\end{theorem}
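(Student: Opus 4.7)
The plan is to view the basin of $(f_j)$ as a small perturbation of the Rosay--Rudin basin of the single automorphism $f_1$, and to build the biholomorphism to $\CC^n$ by a telescoping limit modelled on a Poincar\'e--Dulac normal form of $f_1$.

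\smallskip

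The first step is to reduce to a polynomial model. Because every eigenvalue of $A := \diff{0}{f_1}$ lies in the open unit disk, there are only finitely many resonances, so Poincar\'e--Dulac provides a germ of biholomorphism $\Psi$ at the origin with
\[
 P := \Psi \circ f_1 \circ \Psi^{-1}
\]
a polynomial automorphism consisting of $A$ plus resonant monomials. Set $g_j := \Psi \circ f_j \circ \Psi^{-1}$. By choosing $\varepsilon$ small (in a way depending only on $f_1$ and on $\Psi$) the $g_j$ are uniformly attracting at $0$ on a common ball $\ball{\delta}$ with rates $0 < s < r < 1$ bracketing the moduli of the eigenvalues of $A$, and $\norm{g_j - P}_{\ball{\delta}}$ is as small as required.

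\smallskip

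The second step is to construct the candidate biholomorphism
\[
 \Phi_N := P^{-N} \circ g_N \circ g_{N-1} \circ \cdots \circ g_1,
\]
and to show that $(\Phi_N)$ converges locally uniformly on the basin of $(g_j)$ to a biholomorphism onto $\CC^n$. The convergence is governed by the telescoping difference
\[
 \Phi_{N+1} - \Phi_N \;=\; P^{-(N+1)} \circ (g_{N+1} - P) \circ \bigl( g_N \circ \cdots \circ g_1 \bigr).
\]
Once convergence is established near $0$, the limit $\Phi$ extends to the whole basin via the shifted compositions, and biholomorphicity onto $\CC^n$ follows from the polynomial structure of $P$ (so that the backward orbits $P^{-N}(\ball{\delta})$ exhaust $\CC^n$) together with the standard Rosay--Rudin arguments for injectivity and surjectivity. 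Conjugating back by $\Psi^{-1}$ transports the result to the original basin $\Omega^{0}_{f_j}$.

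\smallskip

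The hard part is the convergence estimate. The inner orbit contracts at rate $r^N$, while the iterated linear part of $P^{-(N+1)}$ expands at worst like $s^{-(N+1)}$, so the naive bound for $\norm{\Phi_{N+1} - \Phi_N}$ contains the ratio $(r/s)^N$, which is useless unless one imposes a Sternberg-type gap such as $r^p < s$ as in Theorem \ref{th:sab}. Overcoming this requires two ingredients working in tandem: the smallness of $\norm{g_j - P}_{\ball{\delta}}$, which supplies a factor of $\varepsilon$ at each telescoping step, and a refined componentwise estimate in the spectral decomposition of $A$ that exploits the resonant-only structure of $P$. One then sets up an inductive bound for $\norm{\Phi_{N+1} - \Phi_N}$ in terms of mixed powers of $\abs{\lambda_i}$ and closes the induction provided $\varepsilon$ is small enough depending only on $f_1$. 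It is precisely this step that replaces the spectral gap hypothesis of Theorem \ref{th:sab} by a smallness hypothesis on the perturbation, and is the part of the argument truly specific to Peters' setting.
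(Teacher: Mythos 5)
The paper imports this theorem from Peters \cite{Peters} without proof --- it is stated only as a point of comparison with Theorem~\ref{th:main} --- so there is no in-paper argument to check your proposal against; I assess the sketch on its own terms. Your opening moves (reducing $f_1$ to a polynomial normal form $P$ via Poincar\'e--Dulac, which is legitimate here because the spectrum lies in the Poincar\'e domain; conjugating the whole sequence; forming the telescoping limit $\Phi_N = P^{-N}\circ g_{1,N}$) are the standard Rosay--Rudin machinery that the paper packages as Proposition~\ref{pp:rr_biholo}. But the sketch stops at exactly the step that carries the theorem, and the resolution you gesture at cannot close the gap as stated.

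Concretely, set $w := g_{1,N}(z)$. The quantity you must bound is $\norm{P^{-(N+1)}(g_{N+1}(w)) - P^{-(N+1)}(P(w))}$; note that your displayed identity $\Phi_{N+1}-\Phi_N = P^{-(N+1)}\circ(g_{N+1}-P)\circ g_{1,N}$ is only mnemonic, since $P^{-(N+1)}$ is nonlinear, so what one actually applies is a Lipschitz estimate for $P^{-(N+1)}$. The best available ingredients are: $\norm{w}\lesssim r^N\norm{z}$; $\norm{g_{N+1}(w)-P(w)}=\norm{(g_{N+1}-g_1)(w)}\lesssim\varepsilon\norm{w}$ by the Schwarz lemma (both maps fix $0$ and are $\varepsilon$-close); and the Rosay--Rudin/Schwarz--Pick Lipschitz constant of $P^{-(N+1)}$ near $0$ is $\lesssim\gamma^{N+1}$ with $\gamma\ge 1/s$. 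Combining gives
\begin{equation*}
\norm{\Phi_{N+1}(z)-\Phi_N(z)}\;\lesssim\;\varepsilon\,\gamma\,(\gamma r)^{N}\,\norm{z},
\end{equation*}
and the base $\gamma r\ge r/s$ exceeds $1$ precisely because there is no spectral-gap hypothesis of Sabiini type. Shrinking $\varepsilon$ rescales only the prefactor, not the base of the geometric series, so ``smallness supplies a factor of $\varepsilon$ at each telescoping step'' does not make the series converge. Your closing paragraph --- the one you yourself flag as the hard part, invoking a ``refined componentwise estimate'' --- is a placeholder rather than an argument. One sees the same obstruction from the paper's side: Proposition~\ref{pp:rr_biholo} requires the key estimate \eqref{eq:rra_rra_maps_est} with an exponent $q$ satisfying $r^q\gamma<1$, whereas the $O(\varepsilon\norm{z})$ error here only gives $q=1$. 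Replacing the gap condition $r^p<s$ by a smallness hypothesis is the entire content of Peters' theorem, and the mechanism by which smallness achieves this is exactly what your sketch omits.
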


In our paper we want to consider the situation when the higher partial derivatives in the common fixed point do not necessarily vanish, but instead agree up to a certain order $q$. 

\pagebreak
\begin{theorem}
\label{th:main}
Given $0 < r, s < 1$, $0 < \delta$, and a holomorphic automorphism $f_1 \colon \CC^n \to \CC^n$ there exists a number $q \in \NN$ such that for any uniformly attracting sequence of holomorphic automorphisms $f_j \colon \CC^n \to \CC^n$ with
\begin{equation*}
\forall j \in \NN \, \forall z \in \ball{\delta} \quad s \| z - z_0 \| < \| f_j(z) - z_0 \| < r \| z - z_0 \|
\end{equation*}
that agree in the fixed point $z_0$ modulo terms of order $q$, i.e.\
\begin{equation}
\label{eq:der_equ}
	\frac{\partial^{|\alpha|} f_1}{\partial z^\alpha}(z_0) = \frac{\partial^{|\alpha|} f_j}{\partial z^\alpha}(z_0)
\end{equation}
for all multi-indices $\alpha \in \NN_0^n$ with $1 \leq |\alpha| \leq q - 1$, the basin of attraction is biholomorphic to $\CC^n$.

Let $p \in \NN$ be such that $r^p < s$. Then we have the following estimates for $q$:
\begin{enumerate}
\item If each of the derivatives $\diff{z_0}{f_j}$, $j \in \NN$, is normal, then
\begin{equation*}
q \leq 
\frac{\ln\left(
			\mathop{\mathlarger{\mathlarger{\mathlarger\sum}}}\limits_{i=1}^{\left(p-1\right)^{\left(n-1\right)^2}}
			\!\!\!\!\!\!\multii{i}\left(\frac{n!}{s^{n}}
					\left(
						1+r\sum_{m=2}^{p-1}\limits n^2\multiisq{m}\left(
						\frac{\sqrt{n}}{\min\left\{1,\delta\right\}}
						\right)^m
					\right)^{n-1}
			\right)^i
			\right)}{\ln\frac{1}{r}}
		+1
\end{equation*}
independent of $f_1$,
\item In case of dimension $n=2$ and if each of the derivatives $\diff{z_0}{f_j}$, $j \in \NN$, is normal, then
\begin{equation*}
q\le\frac{\ln\left(
			\mathop{\mathlarger{\mathlarger{\mathlarger\sum}}}_{i=1}^{p-1}\limits
			(i+1)\left(\frac{2}{s^{2}}
					\left(
						1+2rp \left(
						\frac{\sqrt{2}}{\min\left\{1,\delta\right\}}
						\right)^{p-1}
					\right)
			\right)^i
			\right)}{\ln\frac{1}{r}}
		+1 \in O(p^2)
\end{equation*}
independent of $f_1$,
\end{enumerate}
where \(\multii{m}\) denotes the number of
multi-indices in \(n\) variables of order \(m\).
\end{theorem}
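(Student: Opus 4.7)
The plan is to reduce, via conjugation by a polynomial automorphism of $\CC^n$, to a situation in which Theorem~\ref{th:sab} directly applies. After translating we may assume $z_0 = 0$. The order-one part of~\eqref{eq:der_equ} forces $\diff{0}{f_j} = \diff{0}{f_1} =: A$ for every $j$, and under the normality hypothesis of (1) or (2) a global unitary conjugation---which is itself an automorphism of $\CC^n$ and does not affect the basin up to biholomorphism---brings $A$ into diagonal form $\Lambda = \diag{\lambda_1,\ldots,\lambda_n}$ with $s \le |\lambda_i| \le r$.

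Next I would construct a polynomial automorphism $\varphi\colon\CC^n\to\CC^n$, built as a finite composition of shears (hence with polynomial inverse), of controlled degree and coefficient size, such that the conjugate $\tilde f_1 := \varphi^{-1}\circ f_1\circ\varphi$ has vanishing partial derivatives of every order $k$ in the range $2 \le k \le p-1$. The construction is inductive on $k$: at each step the homogeneous degree-$k$ part of the composed shear solves the cohomological equation $(\lambda^\alpha - \lambda_i)\,b_{\alpha,i} = c_{\alpha,i}$, one scalar equation per pair $(\alpha,i)$ with $|\alpha| = k$, $i \in \{1,\ldots,n\}$, where $c_{\alpha,i}$ is the current degree-$k$ Taylor coefficient. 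Diagonality of $\Lambda$ and the uniform eigenvalue bounds provide the quantitative control of the denominators $\lambda^\alpha - \lambda_i$. The combinatorial factors $\multii{k}$ and $\multiisq{k}$ that appear in the final bound reflect the number of monomials of degree $k$ and the number of scalar equations to be solved at each order.

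Because the same $\varphi$---depending only on $f_1$---is used to conjugate every $f_j$, the chain rule applied to~\eqref{eq:der_equ} shows that $\tilde f_j := \varphi^{-1}\circ f_j\circ\varphi$ agrees with $\tilde f_1$ to order $q-1$ at $0$. Provided $q$ is chosen large enough, $\tilde f_j$ therefore inherits the vanishing condition $\partial^\alpha \tilde f_j(0) = 0$ for $2 \le |\alpha| \le p'-1$ required by Theorem~\ref{th:sab} for some auxiliary order $p'$. To apply that theorem one still has to verify that $\tilde f_j$ is uniformly attracting with new constants $r', s', \delta'$ satisfying $(r')^{p'} < s'$; these constants are estimated by controlling $\norm{\varphi}$, $\norm{\varphi^{-1}}$ and their differentials on suitable balls, using the bounds produced in the inductive construction above. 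Theorem~\ref{th:sab} then yields $\Omega^{0}_{\tilde f_j} \cong \CC^n$, and since $\varphi$ is a polynomial automorphism of $\CC^n$, the original basin $\Omega^{z_0}_{f_j} = \varphi\bigl(\Omega^{0}_{\tilde f_j}\bigr)$ is biholomorphic to $\CC^n$ as well.

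The main obstacle is the quantitative bookkeeping that turns this scheme into the explicit bounds (1) and (2). The coefficients of $\varphi$ grow whenever $|\lambda^\alpha - \lambda_i|$ is small, and these coefficient sizes feed back---through Cauchy-type estimates applied to the conjugated map on shrunken balls---into the new attraction constants and hence into the auxiliary order $p'$ whose threshold determines $q$. Producing bounds that are uniform in $f_1$, depending only on $r,s,\delta,n$, requires carefully composing the estimates over the $p-1$ inductive orders and over the roughly $(n-1)^2$ nested substitutions required per order in $n$ variables, and this is precisely the origin of the outer summation of length $(p-1)^{(n-1)^2}$ in bound~(1). In the dimension-two case the nesting collapses and only the $p-1$ orders remain, producing the much cleaner $O(p^2)$ estimate of~(2).
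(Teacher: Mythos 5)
Your reduction to Theorem~\ref{th:sab} via polynomial conjugation does not work in general because of resonances, which are exactly the obstruction the paper's Rosay--Rudin machinery is built to handle. You claim that ``diagonality of $\Lambda$ and the uniform eigenvalue bounds provide the quantitative control of the denominators $\lambda^\alpha - \lambda_i$,'' but the hypotheses $s \le |\lambda_i| \le r < 1$ and $r^p < s$ only guarantee $|\lambda^\alpha| < |\lambda_i|$ (hence a nonzero denominator bounded away from zero) for $|\alpha| \ge p$. In the range $2 \le |\alpha| \le p-1$ --- precisely where you want to kill derivatives --- the quantity $\lambda^\alpha - \lambda_i$ can vanish (a resonance such as $\lambda_2 = \lambda_1^2$) or be arbitrarily small, so the cohomological equation either has no solution or has no solution with coefficient bounds uniform in $f_1$, and uniformity is what your final sentences promise. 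By Poincar\'e--Dulac normal form theory, resonant monomials cannot be removed by any formal change of variable, so no polynomial automorphism $\varphi$ will in general make $\tilde f_1 = \varphi^{-1}\circ f_1\circ\varphi$ vanish to order $p-1$ at the fixed point. Consequently the conjugated sequence need not satisfy the hypothesis \eqref{eq:sab} of Theorem~\ref{th:sab}, and the intended reduction breaks down at its first step.

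The paper avoids this by not attempting to linearize. It invokes the more general Proposition~\ref{pp:rr_biholo} rather than Theorem~\ref{th:sab}, and Lemma~\ref{th:rra_3} produces, after a unitary conjugation $S$, a \emph{polynomial lower triangular automorphism} $\widetilde G$ that absorbs the resonant (``special'') terms, together with polynomial maps $T^m$ such that $S\circ \widetilde G^{-1}\circ S^{-1}\circ T^m\circ f - T^m \in \landau{m}$. The agreement hypothesis \eqref{eq:der_equ} is what guarantees the $S$, $\widetilde G$, and $T^m$ (for $m\le q$) attached to the various $f_j$ coincide, so that $j$-independent $G_j$ and $T_j$ can be fed into Proposition~\ref{pp:rr_biholo}. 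The real quantitative work --- and the origin of the $(p-1)^{(n-1)^2}$-fold sum in estimate~(1) --- is controlling the growth rate $\gamma$ of iterates of the \emph{nonlinear} $\widetilde G^{-1}$ via Lemmas~\ref{th:rra_bounded} and~\ref{th:inv_est}, then choosing $q$ with $r^q\gamma<1$; your sketch has no analogue of this step because it assumes the conjugate can be made linear. Remark~\ref{rm:onlyspecial}(2) of the paper confirms that in the non-resonant case $\specialp{m}{A_j}=0$ for $2\le m\le p-1$ one indeed gets $\gamma=1/s$ and $q=p$, essentially recovering your intended conclusion; the content of Theorem~\ref{th:main} lies precisely in the resonant case your argument misses.
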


\section{The Rosay--Rudin framework}

In this section we state and prove the key proposition which goes back to Rosay and Rudin \cite{RosayRudin}*{Appendix} for the basin of attraction of a single automorphism. Several special cases of this proposition have been used in the literature, but to the authors' knowledge, it has never been stated as a separate result in full generality. The rather technical assumptions will become clear in the applications. As an immediate corollary we will obtain the aforementioned result of Sabiini, Theorem \ref{th:sab}.

\pagebreak
We will use the following convenient notation as in \cite{WoldFB}:
\begin{definition}
Let $f_j \colon \CC^n \to \CC^n, j \in \NN,$ be a sequence of holomorphic automorphisms. Then we set
	\begin{equation*}
	f_{j, k} := \begin{cases}
		                 	f_k\circ f_{k-1}\circ \dotsb \circ f_{j+1}
		                 	\circ f_j & \text{if } j \le k,\\
		                 	\id & \text{else}
		        \end{cases}
	\end{equation*}
	
	and
	\begin{equation*}
		f_{j, k}^{-1} := \left(f_{j, k}\right)^{-1}.
	\end{equation*}
\end{definition}

\begin{proposition}
\label{pp:rr_biholo}
Let $f_j \colon \CC^n \to \CC^n$, $j \in \NN$, be holomorphic automorphisms fixing $0$.
Assume there exist $\delta > 0$, $1 > r > 0$ and $K \in \NN$ such that
for all	$z \in \ball{\delta}$ and for all $j, k \in \NN$ with $k \geq j + K - 1$ the following holds:
\begin{equation}
\label{eq:rra_rra_bound}
\|f_{j,k}(z) \| \leq r^{k-j+1} \| z \|
\end{equation}
Moreover we assume that for each $j \in \NN$ there exist holomorphic automorphisms $G_j \colon \CC^n \to \CC^n$ and holomorphic self-maps $T_j \colon \CC^n \to \CC^n$ that satisfy the following:
\begin{equation}
\label{eq:rra_rra_maps_0}
T_j(0) = 0, \quad G_j(0) = 0
\end{equation}
\begin{equation}
\label{eq:rra_rra_maps_diff}
\forall j \in K \NN_0 \;:\quad \diff{0}{G_{j+1,j+K}} = f_{j+1,j+K}, \quad \diff{0}{T_{j+K}} = \id,
\end{equation}
\begin{equation}
\label{eq:rra_rra_maps_conv}
G_{1,j} \rightrightarrows 0 \text{ on compacts for } j \to \infty
\end{equation}
\begin{equation}
\label{eq:rra_rra_maps_bound}
\exists b > 0 \forall z \in \ball{\delta} \forall j \in K\NN \,:\, \|T_j(z)\| \leq b \|z\|
\end{equation}
We further assume that there exists an open neighborhood $W$ of $0 \in \CC^n$ such that 
\begin{equation}
\label{eq:rra_rra_maps_inj}
\forall j \in K\NN \,:\, T_j|_W \text{ is one-to-one}
\end{equation}
and that there exist $\rho > 0$, $\gamma > 0$ and $a > 0$ such that
\begin{equation}
\label{eq:rra_rra_maps_sp}
\begin{split}
\forall z, z^\prime \in \ball{\rho} \; \forall j, k \in K\NN, k > j :\\
\| G_{j+1,k}^{-1}(z) -G_{j+1,k}^{-1}(z^\prime) \| \leq a \gamma^{k-j} \| z - z^\prime \|
\end{split}
\end{equation}
and that there exist $c > 0$ and $q \in \NN$ with $r^q \gamma < 1$ and 
\begin{equation}
\label{eq:rra_rra_maps_est}
\begin{split}
\forall z \in \ball{\delta} \; \forall j \in K\NN :\\
\| G_{j+1,j+K}^{-1} \circ T_{j+K} \circ f_{j+1, j+K}(z) - T_j(z) \| \leq c \| z \|^q
\end{split}
\end{equation}
Then the domain
\[
\Omega := \left\{ z \in \CC^n \,:\, \lim_{j \to \infty} f_{1,j}(z) = 0 \right\}
\]
is biholomorphic to $\CC^n$.
\end{proposition}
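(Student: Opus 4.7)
I would follow the classical Rosay--Rudin scheme and exhibit the biholomorphism $\Omega \to \CC^n$ as the locally uniform limit on $\Omega$ of the holomorphic self-maps
\[
\Phi_m := G_{1,mK}^{-1} \circ T_{mK} \circ f_{1,mK} \colon \CC^n \to \CC^n, \qquad m \in \NN,
\]
together with $\Phi_0 := \id$. Hypotheses \eqref{eq:rra_rra_maps_0} and \eqref{eq:rra_rra_maps_diff} immediately give $\Phi_m(0) = 0$ and $\diff{0}{\Phi_m} = \id$ for every $m$, so the candidate limit $\Phi$ will also satisfy these.

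\textbf{Convergence on $\Omega$.} Setting $j := mK$, the key telescoping identity is
\[
\Phi_{m+1}(z) - \Phi_m(z) = G_{1,j}^{-1}\bigl(G_{j+1,j+K}^{-1} \circ T_{j+K} \circ f_{j+1,j+K}(f_{1,j}(z))\bigr) - G_{1,j}^{-1}\bigl(T_j(f_{1,j}(z))\bigr).
\]
The inner difference is bounded by $c\norm{f_{1,j}(z)}^q$ by \eqref{eq:rra_rra_maps_est}, while \eqref{eq:rra_rra_maps_bound} and \eqref{eq:rra_rra_maps_est} together keep both inner arguments inside $\ball{\rho}$ once $\norm{f_{1,j}(z)}$ is small enough. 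To transfer the estimate through $G_{1,j}^{-1}$, I would decompose $G_{1,j}^{-1} = G_{1,K}^{-1} \circ G_{K+1,j}^{-1}$ so that \eqref{eq:rra_rra_maps_sp} supplies a Lipschitz constant of order $a\gamma^{j-K}$ on $\ball{\rho}$ for the second factor, while local Lipschitz continuity of the single automorphism $G_{1,K}^{-1}$ near $0$ absorbs the first. On any compact $L \subset \Omega$, \eqref{eq:rra_rra_bound} gives $\norm{f_{1,j}(z)} \le C_L r^{j}$ for $j$ large, and assembling the bounds yields
\[
\norm{\Phi_{m+1}(z) - \Phi_m(z)} \le C' (\gamma r^{q})^{mK},
\]
which is geometrically summable since $r^q\gamma < 1$. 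Hence $\Phi := \lim_m \Phi_m$ exists locally uniformly on $\Omega$ and $\diff{0}{\Phi} = \id$.

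\textbf{Injectivity.} For any compact $L \subset \Omega$, $f_{1,mK}(L) \subset W$ for $m$ large, so \eqref{eq:rra_rra_maps_inj} makes $T_{mK}$ injective there; composing with the automorphisms $f_{1,mK}$ and $G_{1,mK}^{-1}$ shows each $\Phi_m$ is injective on $L$. Since $\diff{0}{\Phi} = \id$ precludes a degenerate limit, Hurwitz's theorem lifts this to injectivity of $\Phi$ on the connected open set $\Omega$.

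\textbf{Surjectivity, the main obstacle.} The deepest step is showing $\Phi(\Omega) = \CC^n$. My plan is to produce preimages $\Phi_m^{-1}(w)$ for any given $w \in \CC^n$ and push them to a limit in $\Omega$. Observe that each $\Phi_m$ restricts to a biholomorphism from $f_{1,mK}^{-1}(W)$ onto $G_{1,mK}^{-1}(T_{mK}(W))$. Using $\diff{0}{T_{mK}} = \id$ combined with \eqref{eq:rra_rra_maps_bound} and Cauchy estimates on holomorphic functions, I would extract a fixed ball $\ball{\varepsilon} \subset T_{mK}(W)$ valid uniformly in $m \in K\NN$, on which $T_{mK}^{-1}$ is uniformly controlled. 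For any $R > 0$, the convergence \eqref{eq:rra_rra_maps_conv} then forces $G_{1,mK}(\overline{\ball{R}}) \subset \ball{\varepsilon}$ for $m$ large, so every $w \in \ball{R}$ has a preimage $z_m := \Phi_m^{-1}(w) \in f_{1,mK}^{-1}(W)$. A telescoping estimate analogous to the convergence argument, combined with a quantitative Koebe-type bound for the near-identity maps $\Phi_m$, shows that $\{z_m\}$ is Cauchy; its limit $z$ must lie in $\Omega$ because $f_{1,mK}(z_m) \in W$ coupled with \eqref{eq:rra_rra_bound} forces the forward orbit of $z$ to zero. Continuity then gives $\Phi(z) = w$, completing the proof.
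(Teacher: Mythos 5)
Your first two steps track the paper closely, but there are two issues, one technical and one essential.

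\textbf{A technical slip in the convergence step.} You factor $G_{1,j}^{-1} = G_{1,K}^{-1}\circ G_{K+1,j}^{-1}$, apply \eqref{eq:rra_rra_maps_sp} to the inner factor, and then invoke ``local Lipschitz continuity of $G_{1,K}^{-1}$ near $0$'' for the outer factor. But after applying $G_{K+1,j}^{-1}$, the two inner arguments land roughly at distance $\sim a\gamma^{j-K}\norm{\hat z}\sim (\gamma r)^{j}$ from the origin, which need not stay small: the hypothesis is only $r^q\gamma<1$, and $\gamma r$ can very well exceed $1$. So $G_{1,K}^{-1}$ is being evaluated at points that escape every fixed neighborhood of $0$, and a local Lipschitz constant does not suffice. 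The paper avoids this by introducing the shifted maps $\psi_j^l:=G_{l+1,j}^{-1}\circ T_j\circ f_{l+1,j}$ on $\ball{\delta}$, choosing $l=l(E)$ for each compact $E\subset\Omega$, and, crucially, proving the $l$-uniform bound $\norm{\psi^l}_{\ball{\delta}}<R$ (equation \eqref{eq:rra_rra_R}). The factorization $\psi_j|_E=G_{1,l}^{-1}\circ\psi_j^l\circ f_{1,l}|_E$ then keeps the argument of the single fixed automorphism $G_{1,l}^{-1}$ inside $\overline{\ball{R}}$, so uniform convergence transfers cleanly. This is fixable in your argument by replacing the fixed $K$ with $l(E)$, but it needs to be done.

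\textbf{A genuine gap in surjectivity.} Your plan is to produce preimages $z_m:=\Phi_m^{-1}(w)\in f_{1,mK}^{-1}(W)$ and show $z_m$ is Cauchy with limit $z\in\Omega$. Nothing you have written controls where the $z_m$ live. The sets $f_{1,mK}^{-1}(W)$ grow with $m$, and $\Omega$ may well be a bounded proper subdomain of $\CC^n$; a priori the $z_m$ could accumulate on $\partial\Omega$ or escape to infinity, in which case the Cauchy claim fails and, even if a limit existed, there is no reason for it to lie in $\Omega$ (which is open). The ``quantitative Koebe-type bound for the near-identity maps $\Phi_m$'' you invoke would need to be a Lipschitz bound on $\Phi_m^{-1}$ that is uniform in $m$ and valid on all of $\overline{\ball{R}}$, and no such bound is available from the hypotheses. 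The paper sidesteps the whole issue: having shown $\norm{\psi^l}<R$ uniformly in $l$, they extract a Montel subsequential limit $\hat\psi$ of $(\psi^l)$, note $\det\diff{0}{\hat\psi}\ne0$ so $\hat\psi$ is open and injective, deduce $\ball{\varepsilon}\subset\psi^l(\overline{\ball{\hat\delta}})$ for suitable $l$, and then, for given $M$, pick \emph{one} such $l$ with $G_{1,l}(\ball{M})\subset\ball{\varepsilon}$; the relation $\psi|_E=G_{1,l}^{-1}\circ\psi^l\circ f_{1,l}|_E$ on the compact set $E:=f_{1,l}^{-1}(\overline{\ball{\hat\delta}})\subset\Omega$ then gives $\psi(E)\supset\ball{M}$ outright, with no limit of preimages. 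Without the uniform bound $R$ and the $\psi^l$-machinery there is no substitute for this step, so as written the surjectivity argument does not go through. (Your ``extract a fixed ball $\ball{\varepsilon}\subset T_{mK}(W)$'' is also not free: in $\CC^n$, $n\ge2$, there is no Koebe theorem for univalent normalized maps; you need the upper bound from \eqref{eq:rra_rra_maps_bound} and a quantitative inverse function theorem, which deserves to be spelled out.)
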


\begin{proof}
	We may choose \(\delta>0\) arbitrarily small.
	Hence w.l.o.g. let
	\begin{equation}
	\label{eq:rra_rra_obda}
		b\delta<\rho.
	\end{equation}
	
	By \eqref{eq:rra_rra_bound} we get
	\begin{equation*}
		\forall\,j\ge K\ \forall\,z\in \ball{\delta}:
		\abs{f_{1,j}(z)}\le r^j \abs{z} < r^j\delta.
	\end{equation*}
	Hence we have uniform convergence
	\begin{equation}
	\label{eq:rra_rra_uni}
		f_{1,j}\vert_{B_\delta}\rightrightarrows 0
	\end{equation}
	and it follows that
	\begin{equation*}
		\Omega\subset\bigcup_{j\in\mathbb{N}_0}f_{1,j}^{-1}\left(
		\ball{\delta}\right).
	\end{equation*}
	Conversely, let \(j\in\mathbb{N}_0\) und \(z\in f_{1,j}^{-1}\left(
	\ball{\delta}\right)\). Then we have \(f_{1,j}(z)\in B_\delta\)
	and hence
	for \(k\ge j + K\)
	\begin{equation*}
		\norm{f_{1,k}(z)}=\norm{f_{j+1,k}\left(f_{1,j}(z)\right)}
		\le r^{k-j}\norm{f_{1,j}(z)}
		\xrightarrow{k\to\infty}0.
	\end{equation*}
	Altogether we obtain
	\begin{equation}
	\label{eq:rra_rra_union}
		\Omega=\bigcup_{j\in\mathbb{N}_0}f_{1,j}\left(
		\ball{\delta}\right).
	\end{equation}
	In particular we have that \(\Omega\) is an open and connected subset of
	\(\CC^n\).
	With \eqref{eq:rra_rra_uni} it follows in addition that
	\begin{equation}
	\label{eq:rra_rra_compact}
		f_{1,j}\rightrightarrows 0 \text{ on compacts in } \Omega.
	\end{equation}
	
	For \(l\in\mathbb{N}\) we define the sequence \(\left(\psi_j^l
	\right)_{j\ge l+K}\) of maps
	\begin{equation*}
		\psi_j^l\colon \ball{\delta}\to\mathbb{C}^n
	\end{equation*}
	by
	\begin{equation*}
		\psi_j^l:=G_{l+1,j}^{-1}\circ T_j\circ 
		f_{l+1,j}.
	\end{equation*}
	Now let \(l,j\in K\mathbb{N}\) with \(j\ge l+K\).
	From \eqref{eq:rra_rra_bound} and \eqref{eq:rra_rra_maps_bound}
	we get for \(z\in\ball{\delta}\) that
	\begin{equation*}
		\norm{f_{l+1,j}(z)}\le r^{j-l}\delta,
	\end{equation*}
	\begin{equation*}
		\norm{T_j\circ f_{l+1,j}(z)}\le b r^{j-l}\delta
	\end{equation*}
	and
	\begin{equation*}
		\norm{T_{j+K}\circ f_{j+1,j+K}(z)}
		\le b r^K\norm{z}.
	\end{equation*}
	And from \eqref{eq:rra_rra_maps_0} and \eqref{eq:rra_rra_maps_sp}
	we get for \(z\in \ball{\rho}\) that
	\begin{equation*}
		\norm{G_{j+1,j+K}^{-1}(z)}\le a\gamma^K\norm{z}.
	\end{equation*}
	Altogether we have for
	\begin{align*}
		\tilde{z}&:=G_{j+1,j+K}^{-1}\circ T_{j+K}\circ
		f_{j+1,j+K}\left(
		f_{l+1,j}(z)\right)\\
		\hat{z}&:=T_j\left(f_{l+1,j}(z)\right),
	\end{align*}
	that
	\begin{equation*}
		\begin{split}
			\exists\,J\in K\mathbb{N}\ \forall\,l,j\in K\mathbb{N}
			,\,j\ge l + J\ \forall\,z\in\ball{\delta}:
			\norm{\tilde{z}}\le\rho,
			\norm{\hat{z}}
			\le\rho.
		\end{split}
	\end{equation*}
	Hence together with \eqref{eq:rra_rra_bound}, \eqref{eq:rra_rra_maps_sp} and
	\eqref{eq:rra_rra_maps_est} we obtain
	\begin{equation}
	\label{eq:rra_rra_cs}
		\begin{split}
			&\forall\,l,j\in K \mathbb{N},\,j\ge l+J\ \forall\,
			z\in\ball{\delta}:\\
			\norm{\psi_{j+K}^l(x)-\psi_j^l(x)}&=\norm{G_{l+1,j}^{-1}\left(
			\tilde{z}\right)- G_{l+1,j}^{-1}\left(\hat{z}\right)}\\
			&\le a\gamma^{j-l}\norm{\tilde{z}-\hat{z}}\\
			&\le a\gamma^{j-l}c\norm{f_{l+1,j}(z)}^q\\
			&\le a\gamma^{j-l}c\left(r^{j-l}\delta\right)^q\\
			&= ac\delta^q\left(r^q\gamma\right)^{j-l}.			
		\end{split}
	\end{equation}
	It follows that the subsequences \(\left(\psi_j^l\right)_{j\in K
	\mathbb{N}}\) converge uniformly (on \(\ball{\delta}\))
	to maps
	\(\psi^l\colon \ball{\delta}\to\mathbb{C}^n\).
	
	From \eqref{eq:rra_rra_cs} we also get (together with 
	\eqref{eq:rra_rra_obda}) that
	\begin{align*}
		\norm{\psi^l}_{\ball{\delta}}&=
		\norm{\psi^l-\psi_{l+J}^l+\psi_{l+J}^l}_{\ball{\delta}}\\
		&\le \sum_{\substack{j\in K\mathbb{N}\\j\ge l + J}}
		\norm{\psi_{j+K}^l-\psi_j^l}_{\ball{\delta}}
		+\norm{\psi_{l+J}^l}_{\ball{\delta}}\\
		&<\sum_{\substack{j\in K\mathbb{N}\\j\ge l + J}}
		ac\delta^q\left(r^q\gamma\right)^{j-l}+a\gamma^Jbr^J\delta\\
		&=\sum_{\substack{j\in K\mathbb{N}\\j\ge J}}
		ac\delta^q \left(r^q\gamma\right)^{j}+a\gamma^Jbr^J\delta<\infty.
	\end{align*}
	This estimate does not depend on \(l\). Hence we obtain
	\begin{equation}
	\label{eq:rra_rra_R}
		\exists\,R>0\ \forall\,l\in K\mathbb{N}:\quad
		\norm{\psi^l}_{\varphi^{-1}\left(B_\delta\right)}<R.
	\end{equation}
	
	Now we consider the sequence \(\left(\psi_j\right)_{j\in K\mathbb{N}}\)
	of holomorphic maps \linebreak \(\psi_j\colon \Omega \to \mathbb{C}^n\)
	defined by
	\begin{equation*}
		\psi_j := G_{1,j}^{-1}\circ T_j\circ f_{1,j}.
	\end{equation*}
	Let \(E\subset\Omega\) be a compact set. From
	\eqref{eq:rra_rra_compact} we get
	\begin{equation*}
		\exists\,l\in K\mathbb{N}:
		f_{1,l}\left(E\right)\subset \ball{\delta}.
	\end{equation*}
	Hence we have for \(j\in K \mathbb{N}\)
	with \(j\ge l+K\) that
	\begin{equation}
	\label{eq:rra_rra_comp}
		\psi_j\vert_E = G_{1,l}^{-1}\circ
		\psi_j^l\circ f_{1,l}\vert_E
	\end{equation}
	Therefore the uniform convergence of
	\(\left(\psi_j^l\right)_{j\in K\mathbb{N}}\) on
	\( \ball{\delta}\) implies the uniform convergence of
	\(\left(\psi_j\right)_{j\in K\mathbb{N}}\)
	on \(E\). Hence \(\left(\psi_j\right)_{j\in K\mathbb{N}}\)
	converges uniformly on compacts of \(\Omega\)
	to a holomorphic map \(\psi\colon
	\Omega\to\mathbb{C}^n\).
	
	From \eqref{eq:rra_rra_maps_diff} we get
	\begin{equation}
	\label{eq:rra_rra_detdiff}
			\det \diff{0}{\psi} =\det\id = 1.
	\end{equation}
	Let \(x,y\in\Omega\) with \(\psi(x)=\psi(y)\).
	There exists a relatively compact, open and connected set
	\(E\subset\Omega\) with \(0,x,y\in E\).
	By
	\eqref{eq:rra_rra_compact} it follows that
	\begin{equation*}
		\exists\,L\in K\mathbb{N}\ \forall\,j\in K\mathbb{N},\,j\ge L:
		f_{1,j}\left(E\right)\subset W.
	\end{equation*}
	By \eqref{eq:rra_rra_maps_inj} we then know \(\psi_j\)
	to be one-to-one on \(E\) for such \(j\).
	Then \eqref{eq:rra_rra_detdiff} implies that \(\psi\)
	is one-to-one on \(E\). Hence \(x=y\) holds and we have shown that
	\(\psi\) is one-to-one.
	
	For \(l\in K\mathbb{N}\) we have by
	\eqref{eq:rra_rra_bound} that
	\begin{equation*}
		\exists\,J\in K\mathbb{N}\ \forall\,j\in K\mathbb{N},\,j\ge J:
		f_{l+1,j}\left(\ball{\delta}\right)\subset W.
	\end{equation*}
	Hence \(\psi_j^l\) is one-to-one for such \(j\).
	Like \eqref{eq:rra_rra_detdiff}
	we also have
	\begin{equation}
	\label{eq:rra_rra_detdiffl}
		\det\diff{0}{\psi^{l}} \ne 0.
	\end{equation}
	Hence \(\psi^l\) is one-to-one and open.
	By \eqref{eq:rra_rra_R} and Montel's Theorem there exists
	a subsequence of \(\left(\psi^l\right)_{l\in K\mathbb{N}}\)
	converging to a holomorphic map
	\(\hat{\psi}\colon\ball{\delta}\to\mathbb{C}^n\).
	From \eqref{eq:rra_rra_detdiffl} it follows
	\(\det\diff{0}{\hat{\psi}}\ne 0\).
	Hence \(\hat{\psi}\) is also one-to-one.
	Altogether we have for \(0<\tilde{\delta}<\hat{\delta}<\delta\)
	that
	\begin{equation*}
		\exists\,L\in K\mathbb{N}\ \forall\,l\in K\mathbb{N},\,l\ge L:
		\hat{\psi}\left(\overline{\ball{\tilde{\delta}}}\right)
		\subset \psi^l\left(\overline{B_{\hat{\delta}}}\right).
	\end{equation*}
	Together with \(\hat{\psi}(0)=0\) we obtain
	\begin{equation}
	\label{eq:rra_rra_eps}
		\exists\,\varepsilon>0\ \forall\,l\in K\mathbb{N},\,l\ge L:
		\ball{\varepsilon}\subset  \psi^l\left(
		\overline{B_{\hat{\delta}}}\right).
	\end{equation}
	
	Let \(M>0\) be arbitrarily large.
	By \eqref{eq:rra_rra_maps_conv} there exists
	\(l\in K\mathbb{N}\), \(l\ge L\), s.t.
	\(G_{1,l}\left(\ball{M}\right)\subset
	B_\varepsilon\). Define a compact set
	\(E:=f_{1,l}^{-1}\left(
	\overline{B_{\hat{\delta}}}\right)\).
	Then \eqref{eq:rra_rra_union} implies
	\begin{equation*}
		E\subset f_{1,l}^{-1}\left(\ball{\delta}
		\right)\subset\Omega.
	\end{equation*}
	Because of \(f_{1,l}\left(E\right)\subset\left(
	\ball{\delta}\right)\) we have by
	\eqref{eq:rra_rra_comp} that
	\begin{equation*}
		\psi\vert_E = G_{1,l}^{-1}\circ
		\psi^l\circ f_{1,l}\vert_E.
	\end{equation*}
	Together with \eqref{eq:rra_rra_eps} we finally obtain
	\begin{align*}
		\ball{M}\subset G_{1,l}^{-1}\left(\ball{\varepsilon}\right)
		\subset G_{1,l}^{-1}\left(\psi^l\left(\overline{\ball{\hat{\delta}}}\right)\right)
		&= G_{1,l}^{-1}\left(\psi^l\left(f_{1,l}
		\left(E\right)\right)\right)\\
		&=\psi\left(E\right)\\
		&\subset\psi\left(\Omega\right).
	\end{align*}
	Hence the image of \(\Omega\) under \(\psi\) is the whole of \(\CC^n\).
\end{proof}

\begin{proof}[Proof of Theorem \ref{th:sab}]
	We use Proposition \ref{pp:rr_biholo}.
	
	Let \(A_{j}:=\diff{0}{f_{j}}\).
	By \eqref{eq:sab} we then have
	\begin{equation}
	\label{eq:rra_sabiini_maps_est}
		A_{j}^{-1}\circ f_{j}-\id \in
		\landau{p}.
	\end{equation}
	We define
	\begin{align*}
		T_j &:= \id\\
		G_j &:= A_j
	\end{align*}
	Clearly, the assumptions \eqref{eq:rra_rra_maps_0},
	\eqref{eq:rra_rra_maps_diff},
	\eqref{eq:rra_rra_maps_bound} and
	\eqref{eq:rra_rra_maps_inj}
	of Proposition \ref{pp:rr_biholo} are
	fulfilled.
	
	We have \(\norm{G_j}\le r\) and it is easy to see that
	\(\norm{G_j^{-1}}\le\frac{1}{s}\). Hence
	\eqref{eq:rra_rra_maps_conv} holds
	and \eqref{eq:rra_rra_maps_sp} holds for an arbitrary \(\rho>0\), \(a=1\)
	and \(\gamma = \frac{1}{s}\).
	All \(G_j^{-1}\) and
	all \(f_j\) are uniformly bounded (on \(\ball{\delta}\)). Hence
	\eqref{eq:rra_sabiini_maps_est} implies
	\eqref{eq:rra_rra_maps_est}.
	
	Together with
	\(r^p\gamma=\frac{r^p}{s}<1\)
	the theorem now follows from Proposition \ref{pp:rr_biholo} using \(q=p\).
\end{proof}


\section{Proofs}
\label{sc:proofs}
In order to prove Theorem \ref{th:main} using Proposition \ref{pp:rr_biholo} we will need the following lemmas with quantitative estimates.
Therefore we will need some terminology introduced by Rosay--Rudin \cite{RosayRudin}*{Appendix}.

Let \(c_1, \dotsc, c_n\in\mathbb{C}\) and for \(\nu\in\left\{
2, \dotsc, n\right\}\) let
\(h_\nu\colon\mathbb{C}^{\nu-1}\to\CC\)
be holomorphic maps with \(h_\nu(0)=0\).
A holomorphic map \(G\colon\mathbb{C}^n\to\mathbb{C}^n\), 
\(G=(g_1, \dotsc, g_n)\), is called
\emph{lower triangular}, if
\begin{align*}
	g_1(z_1, \dotsc, z_n) &= c_1z_1\\
	g_2(z_1, \dotsc, z_n) &= c_2z_2+h_2(z_1)\\
	                      &\,\ \vdots \\
	g_n(z_1, \dotsc, z_n) &= c_nz_n+h_n(z_1, \dotsc, z_{n-1}).
\end{align*}
It is called \emph{polynomial lower triangular}, if
all \(h_\nu\) are polynomial. In this case we define
\begin{equation*}
	\deg G := \max_{\nu\in\left\{1,\dotsc,n\right\}} \deg g_\nu.
\end{equation*}

Those elements \(c_\nu\) are called \emph{diagonal elements}.
It is easy to see that \(G\) is a holomorphic automorphism
if and only if no \(c_\nu\) vanishes.

For \(m\in\NN\), \(m\ge 2\), let \(\homogp{m}\) denote
the vector space of all holomorphic maps \(h\colon \CC^n\to\CC^n\)
whose components consist of homogeneous polynomials in
\(n\) variables of order \(m\).
Let \(\lambda_1,\dotsc,\lambda_n\) denote the eigenvalues of
a linear map \(A\) s.t. \(0<\abs{\lambda_n}\le\dotsb\le\abs{\lambda_1}<1\).
Clearly, the maps of the form \(h(z)=(0,\dotsc,0,z^\alpha,0,\dotsc,0)\),
\(\alpha=(\alpha_1,\dotsc,\alpha_n)\), \(\abs{\alpha}=m\),
provide a basis of \(\homogp{m}\).
Such an \(h\) with all components but the \(j\)th vanishing
is called \emph{special (with respect to \(A\))},
if \(\alpha_j=\dotsb=\alpha_n=0\) and
\(\lambda_j = \lambda_1^{\alpha_1}\dotsm \lambda_{j-1}^{\alpha_{j-1}}\).
We denote the vector subspace of those special elements by \(\specialp{m}{A}\).

Rosay--Rudin \cite{RosayRudin}*{Appendix} observed
the following: If we have \(\abs{\lambda_1}^p<\abs{\lambda_n}\)
for some \(p\in\NN\), \(p\ge 2\), we get \(\specialp{m}{A}=0\)
for all \(m\ge p\).

\begin{lemma}
\label{th:rra_2}
	Let \(A\in\mat{\CC}{n}{n}\) be lower triangular
	with eigenvalues \(1>\abs{\lambda_1}\ge\dotsb \ge\abs{\lambda_n}>0\)
	down its main diagonal and let
	\(m\in\mathbb{N}\), \(m \ge 2\). Then for every
	\(R\in\homogp{m}\) there exist \(X\in\specialp{m}{A}\)
	and \(H\in\homogp{m}\) s.t. \(R=X+\Gamma_AH\) where
	\(\Gamma_A\colon\homogp{m}\to \homogp{m}\)
	is the commutator map \(\Gamma_AH:= A\circ H-H\circ A\).
	If \(A\) is diagonal, then
	\(X\) can be chosen to satisfy
	\(\norm{X}_{\poly{1}}\le nQ(m)\norm{R}_{\poly{1}}\).
\end{lemma}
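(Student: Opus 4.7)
The plan is to handle the diagonal case explicitly via the monomial eigenbasis of $\Gamma_A$, and then to reduce the general lower triangular case to an inductive argument along a natural $\Gamma_A$-invariant filtration, relying on the same kernel analysis to keep the resonant remainder inside $\specialp{m}{A}$.

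For diagonal $A = \diag{\lambda_1, \dotsc, \lambda_n}$, the monomials $e_j z^\alpha$ with $\abs{\alpha} = m$ are eigenvectors of $\Gamma_A$ with eigenvalues $\lambda_j - \lambda^\alpha$, where $\lambda^\alpha := \lambda_1^{\alpha_1} \dotsm \lambda_n^{\alpha_n}$. Writing $R = \sum r_{j,\alpha}\, e_j z^\alpha$ and splitting the index pairs $(j,\alpha)$ into resonant ones ($\lambda_j = \lambda^\alpha$) and non-resonant ones, I put all resonant terms into $X$ and divide each non-resonant coefficient by $\lambda_j - \lambda^\alpha$ to form $H$; the identity $R = X + \Gamma_A H$ is then immediate. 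The critical verification is that every such resonance is special: if $\lambda_j = \lambda^\alpha$ with $\abs{\alpha} = m \ge 2$ and $\alpha_k > 0$ for some $k \ge j$, then the strict bound $\abs{\lambda^\alpha} < \abs{\lambda_k} \le \abs{\lambda_j}$ holds (because either another entry of $\alpha$ contributes a factor strictly below $1$ or $\alpha_k \ge 2$ does), contradicting $\lambda_j = \lambda^\alpha$; hence $\alpha_k = 0$ for all $k \ge j$ and $X \in \specialp{m}{A}$. The norm bound then follows from Cauchy's estimate $\abs{r_{j,\alpha}} \le \norm{R}_{\poly{1}}$ together with the observation that the resonant $\alpha$ for a fixed $j$ live in $\NN_0^{j-1}$ and hence number at most $\multii{m}$; summing over the $n$ components gives $\norm{X}_{\poly{1}} \le n\,\multii{m}\,\norm{R}_{\poly{1}}$.

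For a general lower triangular $A = D + N$ with $D$ diagonal and $N$ strictly lower triangular, note first that $\specialp{m}{A} = \specialp{m}{D}$. The key observation is that $\Gamma_A$ preserves the decreasing filtration $F_j := \operatorname{span}\{e_k z^\alpha : k \ge j\}$, because both $A \circ H$ (by lower triangularity of $A$) and $H \circ A$ vanish in the first $j-1$ slots whenever $H$ does. On the quotient $F_j/F_{j+1}$, identified with degree-$m$ polynomials in the $j$-th component, $\Gamma_A$ induces the map $p \mapsto \lambda_j\, p - p \circ A$; and since $(Az)^\alpha$ equals $\lambda^\alpha z^\alpha$ plus monomials that are lexicographically greater (the off-diagonal entries of $A$ replace some $z_i$ by $z_\ell$ with $\ell < i$), this operator is lower triangular in the lex-ordered monomial basis with diagonal entries $\lambda_j - \lambda^\alpha$. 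I would then argue by descending induction on $j$, and within each step by lex-induction on monomials: each non-resonant monomial of the $j$-th component is cleared by solving for a single coefficient of $H$, while each resonant monomial is automatically special by the diagonal-case kernel analysis above and can be absorbed into $X$. The main obstacle is exactly this bookkeeping: one must verify that the strictly lower triangular perturbation $N$, which contributes the lex-greater terms in $(Az)^\alpha$, never produces a resonant remainder outside $\specialp{m}{A}$ — which is precisely what the diagonal kernel analysis rules out, provided the elimination is carried out on the filtration quotients rather than directly on $\homogp{m}$.
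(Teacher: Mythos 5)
Your proposal follows the same route as the paper (which in turn simply cites Rosay--Rudin for the decomposition and asserts the bound $\norm{P}\le nQ(m)$ without proof): both rest on the fact that for diagonal $A$ the monomials $e_j z^\alpha$ diagonalize $\Gamma_A$ with eigenvalues $\lambda_j-\lambda^\alpha$, so that $X$ is the projection onto the kernel and $H$ is obtained by inverting $\Gamma_A$ on the complement. Where you go beyond the paper is in actually proving the two facts the paper leaves implicit: that every resonant index $(j,\alpha)$ is special (your strict-inequality argument using $\abs{\lambda^\alpha}<\abs{\lambda_k}\le\abs{\lambda_j}$ when $\alpha_k>0$ for some $k\ge j$ is exactly right, and is what makes $\ker\Gamma_A=\specialp{m}{A}$), and that the projection is bounded by $nQ(m)$ (via Cauchy estimates $\abs{r_{j,\alpha}}\le\norm{R}_{\poly{1}}$ and the count of special $\alpha$ per component; in fact you get the slightly better $\sqrt{n}\,Q(m)$ if you use the Euclidean norm). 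For the non-diagonal lower triangular case the paper again just cites Rosay--Rudin; your filtration $F_j=\operatorname{span}\{e_k z^\alpha:k\ge j\}$ is $\Gamma_A$-invariant for the reason you give, the induced map on $F_j/F_{j+1}$ is $p\mapsto\lambda_j p-p\circ A$, and the lex-triangularity observation (since the off-diagonal entries of $A$ only push indices downward, $(Az)^\alpha$ is $\lambda^\alpha z^\alpha$ plus lex-larger monomials) is correct -- this is essentially Rosay and Rudin's own argument, so it is not a different route, just a spelled-out one. Two small points: the induction on $j$ should be \emph{ascending} rather than descending, since modifying $H_j$ changes $\Gamma_AH$ only in components $\ge j$, so one clears component $1$ first and never revisits it; and this filtration-quotient elimination, which you describe as "the main obstacle," is genuinely the crux and would need to be carried out in full (tracking that each quotient step produces only special residuals and a well-defined $H_j$) before the sketch becomes a proof. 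As written the ideas are correct and match the cited source; the general case needs the bookkeeping finished.
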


\begin{proof}
	The proof of the general case (without estimate) is due to
	Rosay and Rudin \cite{RosayRudin}*{Appendix, Lemma 2}.
	They first consider our special case
	\(A=\diag{\lambda_1,\dotsc,\lambda_n}\).
	In this case \(\Gamma_A\) is an invertible linear operator
	on the space of non-special elements of \(\homogp{m}\).
	Let \(P\colon \homogp{m}\to\specialp{m}{A}\)
	be the projection onto \(\specialp{m}{A}\) and set
	\(X:= PR\) and \(H:=\left(\Gamma_A\right)^{-1}\left(R-PR\right)\).
	Then the estimate follows from \(\norm{P}\le n Q(m)\).
\end{proof}

\begin{remark}
\label{th:rem_special}
	In case of dimension \(n=2\) the vector subspace
	\(\mathcal{X}_A^m\) is at most one-dimensional.
	In this case it is spanned by
	\begin{equation*}
		\left(z_1,z_2\right) \mapsto
		\left(0,z_1^m\right).
	\end{equation*}
	For
	\begin{equation*}
		R\left(z_1,z_2\right)=
		\left(\sum_{i=0}^m a_{1,i}z_1^iz_2^{m-i},
		\sum_{i=0}^m a_{2,i}z_1^iz_2^{m-i}\right),
	\end{equation*}
	we then have:
	\begin{equation*}
	\begin{split}
		\norm{X}_{\Delta_1}&\le
		\sup_{\left(z_1,z_2\right)\in\Delta_1}\norm{a_{2,m}
		\begin{pmatrix}
			0\\z_1^m
		\end{pmatrix}}
		\le\sup_{\substack{\left(z_1,z_2\right)\in\Delta_1\\z_2=0}}
		\norm{\begin{pmatrix}
			\sum_{i=0}^m \limits a_{1,i}z_1^iz_2^{m-i}\\
			\sum_{i=0}^m \limits a_{2,i}z_1^iz_2^{m-i}
		\end{pmatrix}} \\
		&\le \norm{R}_{\Delta_1}
	\end{split}
	\end{equation*}
\end{remark}

\begin{lemma}
\label{th:rra_3}
	Let \(f\colon \CC^n\to\CC^n\) be a holomorphic map
	fixing 0 and
	\(A:=\diff{0}{f}\) with eigenvalues \(0<\abs{\lambda_i}<1\).
	
	Then there exist a unitary linear map \(S\),
	a polynomial lower triangular automorphism
	\(\widetilde{G}\) with \(\widetilde{G}(0)=0\) and
	\(\diff{0}{\widetilde{G}}=S^{-1}\circ A\circ S\) and
	polynomials \(T^m\colon\CC^n\to\CC^n\), \(m\in\NN\), \(m\ge 2\),
	with \(T^m(0)=0\) and \(\diff{0}{T^m}=\id\) s.t.
	\begin{equation*}
		S\circ \widetilde{G}^{-1}\circ S^{-1}\circ T^m\circ f-T^m\in
		\landau{m}.
	\end{equation*}
	
	\(S\) only depends on \(A\).
	For \(m>2\) all maps \(T^m\)
	only depend on the derivatives of \(f\) up to order \(m-1\).
	In addition we have \(T^2=\id\).
	If all derivatives of \(f\) up to order
	\(m-1\) are special respective \(A\), we have \(T^m=\id\).

	If for some \(p\in\mathbb{N}\), \(p\ge 2\), we have
	\(\mathcal{X}_A^m=0\) for all \(m\ge p\) then
	\(\widetilde{G}\) only depends on the derivatives of \(f\)
	up to order \(p-1\) and we have \(\deg \widetilde{G} \le p-1\).
	In particular we have \(\widetilde{G}=S^{-1}\circ A\circ S\)
	for \(p=2\).
	
	If \(A\) is normal and if we find \(0<\delta\le 1\) and \(0<s<r\) s.t.
	\begin{equation}
	\label{eq:rra_ass}
		\forall\,z\in B_\delta: s\norm{z}\le \norm{f(z)}\le r\norm{z},
	\end{equation}
	then we can write
	\begin{equation*}
		\widetilde{G}=S^{-1}\circ A\circ S+H
	\end{equation*}
	with
	\begin{equation*}
		\norm{S^{-1}\circ A^{-1}\circ S}\le\frac{1}{s}
	\end{equation*}
	and
	\begin{equation*}
		\forall\,z\in B_{\frac{\delta}{\sqrt{n}}}:
		\norm{H(z)}\le r\norm{z}\sum_{m=2}^{p-1}
		n^2\multiisq{m}\left(\frac{\sqrt{n}}{\delta}\right)^m.
	\end{equation*}
\end{lemma}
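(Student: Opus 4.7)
The plan is to perform a Poincaré--Dulac normalization, constructing the homogeneous components of $\widetilde{G}$ and the $T^m$ order by order. As the first step I would invoke the Schur decomposition to choose a unitary $S$, depending only on $A$, such that $\widetilde{A} := S^{-1} \circ A \circ S$ is lower triangular with eigenvalues $\lambda_1, \ldots, \lambda_n$ (ordered by decreasing modulus) down its diagonal. Setting $F := S^{-1} \circ f \circ S$ and $\widetilde{T}^m := S^{-1} \circ T^m \circ S$, the desired conjugation identity becomes
\[
\widetilde{T}^m \circ F \equiv \widetilde{G} \circ \widetilde{T}^m \pmod{\landau{m}}.
\]

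Writing $F = \widetilde{A} + \sum_{k \geq 2} F_k$, $\widetilde{G} = \widetilde{A} + \sum_{k \geq 2} \widetilde{G}_k$ with $\widetilde{G}_k \in \specialp{k}{\widetilde{A}}$, and $\widetilde{T}^m = \id + \sum_{k=2}^{m-1} \widetilde{T}_k$, and collecting the degree-$m$ terms in the relation yields
\[
\widetilde{G}_m - \Gamma_{\widetilde{A}} \widetilde{T}_m = P_m,
\]
where $\Gamma_{\widetilde{A}}$ is the commutator from Lemma \ref{th:rra_2} and $P_m \in \homogp{m}$ is a polynomial expression in $F_2, \ldots, F_m$ and the previously constructed $\widetilde{G}_k, \widetilde{T}_k$ for $k < m$ (with $P_2 = F_2$ as the base case). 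Lemma \ref{th:rra_2} then decomposes $P_m = \widetilde{G}_m + \Gamma_{\widetilde{A}}(-\widetilde{T}_m)$ uniquely with $\widetilde{G}_m$ special, producing both simultaneously. The qualitative claims follow immediately from this construction: $T^2 = \id$ as nothing is truncated below degree $2$; for $m > 2$ each $\widetilde{T}_k$ depends only on $F_2, \ldots, F_k$, so $T^m$ depends only on derivatives of $f$ up to order $m-1$; if all those derivatives are already special with respect to $A$ then each $P_k$ is itself special and one may take $\widetilde{T}_k = 0$, giving $T^m = \id$; lower-triangularity of $\widetilde{G}$ is automatic since the elements of $\specialp{k}{\widetilde{A}}$ are lower triangular by definition; and if $\specialp{k}{A} = 0$ for all $k \geq p$, then $\widetilde{G}_k = 0$ for such $k$, so $\deg \widetilde{G} \leq p - 1$ and $\widetilde{G}$ depends only on derivatives of $f$ up to order $p-1$, collapsing to $\widetilde{G} = S^{-1} \circ A \circ S$ when $p = 2$.

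For the quantitative estimates, when $A$ is normal the Schur unitary $S$ in fact diagonalizes $A$, giving $\widetilde{A} = \diag{\lambda_1, \ldots, \lambda_n}$. Evaluating the hypothesis $s\|z\| \leq \|f(z)\|$ on eigenvectors of $A$ and letting $\|z\| \to 0$ forces $|\lambda_i| \geq s$, whence $\|\widetilde{A}^{-1}\| \leq 1/s$, and the unitarity of $S$ yields the same bound for $\|S^{-1} \circ A^{-1} \circ S\|$. For the estimate on $H = \widetilde{G}_2 + \cdots + \widetilde{G}_{p-1}$, I would apply Cauchy's inequality to $f$ on $\ball{\delta}$ after rescaling to the unit polydisc $\poly{1}$ (which contributes the factor $(\sqrt{n}/\delta)^m$ at order $m$), use the diagonal estimate $\|\widetilde{G}_m\|_{\poly{1}} \leq n\multii{m}\|P_m\|_{\poly{1}}$ from Lemma \ref{th:rra_2}, and iterate through $2 \leq m \leq p-1$ via the recursion for $P_m$. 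The main obstacle is precisely this bookkeeping: combining the polydisc--ball rescaling, the projection constant $n\multii{m}$ from Lemma \ref{th:rra_2}, and the combinatorial count of monomials arising in the recursion for $P_m$ must be arranged so that the constants assemble into the stated $n^2\multiisq{m}(\sqrt{n}/\delta)^m$ bound, with the hypothesis $\delta \leq 1$ used to pass from $\poly{1}$ back to $\ball{\delta/\sqrt{n}}$ at the end.
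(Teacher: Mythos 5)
Your qualitative argument is correct and follows the same Poincar\'e--Dulac route as the paper: Schur-triangularize with a unitary $S$, expand into homogeneous components, solve the cohomological equation $\widetilde{G}_m - \Gamma_{\widetilde{A}}\widetilde{T}_m = P_m$ at each order using Lemma~\ref{th:rra_2}, and read off all the dependency and degree claims from the construction. The only cosmetic difference is that you build $T^m$ additively as $\id + \sum_{k<m}\widetilde{T}_k$, while the paper follows Rosay--Rudin and sets $T^{m+1} = T^m + H^m \circ T^m$ (equivalently $T^{m+1} = (\id + H^m)\circ T^m$); both give polynomials tangent to the identity solving the same order-$m$ equation, so nothing is lost.

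The quantitative estimate is where the proposal has a real gap, and you flag it yourself but misidentify what is needed. Your plan is to ``iterate through $2\le m\le p-1$ via the recursion for $P_m$,'' i.e.\ to bound $P_m$ through its dependence on $F_2,\dots,F_m$ and the previously constructed $\widetilde{T}_k,\widetilde{G}_k$. Carrying this out would make the bound on $\widetilde{G}_m$ depend multiplicatively on the bounds for all earlier $\widetilde{T}_k$ and $\widetilde{G}_k$, producing constants that compound as $m$ increases; you would \emph{not} arrive at the paper's clean per-$m$ bound $n^2\multiisq{m}(\sqrt{n}/\delta)^m r$. The paper's proof deliberately avoids the recursion: it applies a single Cauchy-type estimate
\(\norm{\widetilde{R}^m}_{\Delta_{\delta/\sqrt{n}}}\le n\multii{m}\,\norm{\widetilde f}_{\Delta_{\delta/\sqrt{n}}}\big/(\delta/\sqrt{n})^m\)
separately for each $m$, then the projection constant $n\multii{m}$ from Lemma~\ref{th:rra_2}, then the homogeneity rescaling, and finally $\norm{\widetilde f}\le r\norm{z}$ from \eqref{eq:rra_ass}. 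No term-by-term recursion over $m$ appears. The missing idea in your proposal is therefore precisely this direct control of the resonant coefficient $\widetilde{X}^m$ by $f$ alone (not by the tower of earlier normalizing data); without it, the asserted constant cannot be reached by the route you describe. The remaining quantitative items you state --- $\|\widetilde A^{-1}\|\le 1/s$ via the eigenvalue bound $|\lambda_i|\ge s$, unitarity of $S$, and summing $H=\sum_{m=2}^{p-1}\widetilde X^m$ --- are fine and match the paper.
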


\begin{proof}
	We denote by \(\lambda_1,\dotsc,\lambda_n\)
	the eigenvalues of \(A\) s.t. \(0\le \abs{\lambda_n}\le
	\dotsc,\le \abs{\lambda_1}<1\) and
	first consider the special case that \(A\) is lower triangular
	with \(\lambda_1,\dotsc,\lambda_n\) down its main diagonal.
	The proof (without estimates) for this case is due to Rosay and Rudin
	\cite{RosayRudin}*{Appendix, Lemma 3}.
	
	We recall from their proof the inductive construction
	of those polynomials \(T^m\) and polynomial lower triangular automorphisms
	\(\widetilde{G}^m\) with 
	\(\widetilde{G}^m(0)=0\) and \( \diff{0}{\widetilde{G}^m}=A\) s.t.
	\begin{equation}
	\label{eq:rra_ih}
		T^m\circ f-\widetilde{G}^m\circ T^m\in\landau{m}.
	\end{equation}
	Let \(T^2 := \id\) and \(\widetilde{G}^2 := A\).
	If for some \(m\) the maps \(T^m\) and \(\widetilde{G}^m\)
	are constructed and \eqref{eq:rra_ih} holds, then there exists
	\(R^m\in\homogp{m}\) s.t.
	\begin{equation*}
		T^m\circ f-\widetilde{G}^m\circ T^m-R^m\in\landau{m+1}.
	\end{equation*}
	
	In the case that \(R^m\) is special, we set \(X^m:=R^m\)
	and \(H^m:=0\).
	Otherwise
	Lemma \ref{th:rra_2} gives us
	\(X^m\in\specialp{m}{A}\)
	and \(H^m\in\homogp{m}\) with
	\begin{equation*}
		R^m = X^m + A\circ H^m-H^m\circ A.
	\end{equation*}
	Note that if \(A\) is diagonal, we get (in both cases) the estimate
	\begin{equation}
	\label{eq:rra_3_le}
		\norm{X^m}_{\Delta_1}\le nQ(m)\norm{R^m}_{\Delta_1}.
	\end{equation}
	Now let \(\widetilde{G}^{m+1}:=\widetilde{G}^m+X^m\) and
	\(T^{m+1}:=T^m+H^m\circ T^m\).
	For \(m\) large enough we have \(\specialp{m}{A}=0\)
	and hence \(\widetilde{G}^m = \widetilde{G}^{m+1} =: G\).
	Those maps satisfy the desired properties with \(S=\id\).
	
	To prove the general case we find a unitary \(S\) s.t.
	\(\widetilde{A}:=S^{-1}\circ A\circ S\) meets the requirements
	of the special case above.
	For \(\widetilde{f}:=S^{-1}\circ f\circ S\) we will then find
	maps \(\widetilde{T}^m\)
	and \(\widetilde{G}\) s.t.
	\begin{equation*}
		\widetilde{G}^{-1}\circ \widetilde{T}^m\circ \widetilde{f}-
		\widetilde{T}^m\in \landau{m}.
	\end{equation*}
	With \(T^m:=S\circ
	\widetilde{T}^m\circ S^{-1}\) we can rewrite this to
	\begin{equation*}
		S\circ \widetilde{G}^{-1}\circ S^{-1}\circ T^m\circ f-T^m\in
		\landau{m}.
	\end{equation*}
	All formulated dependencies are obvious by construction.
	
	If \(A\) is normal, we can choose \(S\) s.t. \(\widetilde{A}\) is
	diagonal. The construction above yields
	\begin{equation}
	\label{eq:rra_3_est}
		\widetilde{G}=\widetilde{A}+\sum_{m=2}^{p-1}\widetilde{X}^m.
	\end{equation}
	By \eqref{eq:rra_ass} we have
	\(\norm{\widetilde{A}^{-1}}\le\frac{1}{s}\).
	All \(\widetilde{X}^m\) and \(\widetilde{R}^m\) are homogeneous polynomials.
	Hence from \eqref{eq:rra_3_le} follows
	\(\norm{\widetilde{X}^m}_{\Delta_\frac{\delta}{\sqrt{n}}}\le
	n\multii{m}\norm{\widetilde{R}^m}_{\Delta_\frac{\delta}{\sqrt{n}}}\).
	By the Cauchy integral formula we get
	\begin{equation*}
		\norm{\widetilde{R}^m}_{\Delta_\frac{\delta}{\sqrt{n}}}
		\le n\multii{m}\frac{\norm{F}_{\Delta_{\frac{\delta}{\sqrt{n}}}}}{
		\left(\frac{\delta}{\sqrt{n}}\right)^m}.
	\end{equation*}
	Together we have for all \(z\in B_{\frac{\delta}{\sqrt{n}}}\)
	\begin{align*}
		\norm{\widetilde{X}^m(z)}&=
		\norm{\widetilde{X}^m\left(\norm{z}\frac{\sqrt{n}}{\delta}
		\frac{z}{\norm{z}}\frac{\delta}{\sqrt{n}}\right)}\\
		&= \left(\norm{z}\frac{\sqrt{n}}{\delta}\right)^m
		\norm{\widetilde{X}^m}_{\Delta_{\frac{\delta}{\sqrt{n}}}}\\
		&\le \norm{z}\frac{\sqrt{n}}{\delta}n^2\multiisq{m}
		\left(\frac{\sqrt{n}}{\delta}\right)^m
		\norm{F}_{\Delta_{\frac{\delta}{\sqrt{n}}}}\\
		&\le \norm{z}n^2\multiisq{m}\left(\frac{\sqrt{n}}{\delta}\right)^mr.
	\end{align*}
	Hence for
	\begin{equation*}
		H:=\sum_{m=2}^{p-1}\widetilde{X}^m,
	\end{equation*}
	we have
	\begin{equation*}
		\forall\,z\in B_{\frac{\delta}{\sqrt{n}}}:
		\norm{H(z)}\le r\norm{z}\sum_{m=2}^{p-1}
		n^2 \multiisq{m}\left(\frac{\sqrt{n}}{\delta}\right)^m.\qedhere
	\end{equation*}
\end{proof}

\begin{remark}
\label{th:rem_est}
	In case of dimension \(n=2\)
	there can be at most one \(m\) for which
	\(\specialp{m}{A}\ne 0\). Hence at most one summand in \eqref{eq:rra_3_est}
	does not vanish. Together with
	Remark \ref{th:rem_special}
	we obtain a better estimate:
	\begin{equation*}
		\forall\,z\in B_{\frac{\delta}{\sqrt{2}}}:
		\norm{H(z)}\le 2r
		Q(p-1)\left(\frac{\sqrt{2}}{\delta}\right)^{p-1}\norm{z}.
	\end{equation*}
\end{remark}

\begin{lemma}
\label{th:rra_bounded}
	Let \(G\) be a lower triangular holomorphic automorphism with
	\(\deg G\le d\) for some \(d\in\NN\) and
	\begin{equation*}
		\forall\,z\in B_\rho:
		\norm{G(z)}\le C\norm{z}
	\end{equation*}
	for some \(0<\rho<1\) and \(C>0\).
	Then there exists \(\gamma > 0\) s.t.
	\begin{equation*}
		G_{1,k}\left(\poly{\frac{\rho}{\sqrt{n}}}\right)\subset
		\Delta_{\gamma^{k}}
	\end{equation*}
	where \(\poly{\delta}\) is the polydisc of radius \(\delta\) about \(0\).
	We may choose
	\begin{equation}
	\label{eq:rra_bounded_beta}
		\gamma:=\sum_{i=1}^{d^{n-1}}\multii{i}\left(\sqrt{n}\right)^iC^i.
	\end{equation}
\end{lemma}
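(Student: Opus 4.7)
The strategy is to combine a uniform componentwise degree bound on the iterates $G^k$ with a Cauchy-estimate recursion on the sup-norm; since the hypothesis of the lemma involves only a single $G$, the notation $G_{1,k}$ stands for the iterate $G^k$. Set $\delta_0:=\rho/\sqrt{n}$, so that $\poly{\delta_0}\subset\ball{\rho}$ because $\norm{z}_2\le\sqrt{n}\,\norm{z}_\infty$. Let $M_k:=\sup_{z\in\poly{\delta_0}}\norm{G^k(z)}_\infty$. The goal is to show $M_k\le\gamma^k$.

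A first induction on $k$ shows that $\deg(G^k)_\nu\le d^{\nu-1}$ for every component $\nu$. The base case is immediate from $\deg G\le d\le d^{\nu-1}$ (with $\deg G_1=1=d^0$). For the inductive step, the lower triangular recurrence
\[
	(G^{k+1})_\nu \;=\; c_\nu\,(G^k)_\nu \;+\; h_\nu\!\bigl((G^k)_1,\dots,(G^k)_{\nu-1}\bigr),
\]
combined with $\deg h_\nu\le d$ and the inductive bound $\deg(G^k)_i\le d^{i-1}\le d^{\nu-2}$ for $i<\nu$, yields $\deg(G^{k+1})_\nu\le d\cdot d^{\nu-2}=d^{\nu-1}$. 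In particular $\deg G^k\le d^{n-1}$, uniformly in $k$.

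For the base of a second induction: $\norm{G(z)}_\infty\le\norm{G(z)}_2\le C\norm{z}_2\le\sqrt{n}\,C\,\delta_0$ for $z\in\poly{\delta_0}$, whence $M_1\le\sqrt{n}\,C\,\delta_0$. The inductive step decomposes $G^{k+1}=G^k\circ G$, so that $G^{k+1}(\poly{\delta_0})\subset G^k(\poly{M_1})$. Since $G^k(0)=0$ and $\deg(G^k)_\nu\le d^{n-1}$, expanding
\[
	(G^k)_\nu(w)\;=\;\sum_{1\le|\alpha|\le d^{n-1}}a^{(k)}_{\nu,\alpha}\,w^\alpha
\]
and applying the Cauchy estimates $|a^{(k)}_{\nu,\alpha}|\le M_k/\delta_0^{|\alpha|}$ on $\poly{\delta_0}$ yields, for $w\in\poly{M_1}$,
\[
	|(G^k)_\nu(w)| \;\le\; M_k\sum_{i=1}^{d^{n-1}}\multii{i}\,(M_1/\delta_0)^i \;\le\; M_k\sum_{i=1}^{d^{n-1}}\multii{i}\,(\sqrt{n}\,C)^i \;=\; M_k\,\gamma.
\]
Therefore $M_{k+1}\le\gamma\,M_k$, and iterating from $M_0=\delta_0<1$ gives $M_k\le\delta_0\,\gamma^k\le\gamma^k$, which is exactly $G^k(\poly{\delta_0})\subset\poly{\gamma^k}$.

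The main technical point is the degree bound $\deg(G^k)_\nu\le d^{\nu-1}$: it is this uniform-in-$k$ control on the componentwise degree that makes the upper summation limit $d^{n-1}$ in the definition of $\gamma$ sufficient for a clean recursion. Without the triangular structure one would fear degree growth of order $d^k$ under composition, in which case $\gamma$ would itself depend on $k$; once the degree bound is in place, everything else is a routine Cauchy-estimate calculation.
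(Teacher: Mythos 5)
Your proof is correct and takes essentially the same approach as the paper: a uniform degree bound for the iterates, Cauchy estimates for the coefficients on the polydisc $\poly{\rho/\sqrt{n}}$, and a sup-norm recursion $M_{k+1}\le\gamma M_k$ obtained from the composition $G^{k+1}=G^{k}\circ G$. The only cosmetic difference is that you verify the degree bound $\deg\left(G^{k}\right)_{\nu}\le d^{\nu-1}$ by explicit induction on the lower triangular recurrence, whereas the paper cites this fact from Rosay--Rudin.
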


\begin{proof}
	The proof without estimate is due to
	Rosay and Rudin \cite{RosayRudin}*{Appendix, Lemma 1}.
	They first observed
	\begin{equation}
	\label{eq:rra_bounded_deg}
		\forall\,k\in\mathbb{N}:\deg G_{1,k}\le d^{n-1}.
	\end{equation}
	
	We denote by \(\left(G\right)_\nu\) the \(\nu\)-th component of \(G\).
	Then we have
	\begin{equation}
	\label{eq:rra_bounded_bound}
		\forall\,z\in\Delta_{\frac{\rho}{\sqrt{n}}}:
		\abs{\left(G(z)\right)_\nu}\le C\rho\le \gamma.
	\end{equation}
	
	Now suppose that
	\begin{equation*}
		\forall\,z\in\Delta_{\frac{\rho}{\sqrt{n}}}:
		\abs{\left(G_{1,k}\right)_\nu}\le \gamma^{k}
	\end{equation*}
	for some \(k\in\NN\).
	By
	\eqref{eq:rra_bounded_deg} we have that
	\begin{equation*}
		\left(G_{1,k}(z)\right)_\nu = 
		\sum_{\substack{\alpha\in\mathbb{N}_0^n\\1\le\abs{\alpha}\le d^{n-1}}}
		a_{k,\nu,\alpha}z^\alpha
	\end{equation*}
	and by the Cauchy integral formula we get
	\begin{equation*}
		\abs{a_{k,\nu,\alpha}} 
		\le \frac{\gamma^{k}}{\rho^{\abs{\alpha}}}
	\end{equation*}
	Together with \(G_{1,k+1}=G_{1,k}\circ G\)
	and \eqref{eq:rra_bounded_bound}
	we obtain for \(z\in\Delta_{\frac{\rho}{\sqrt{n}}}\)
	\begin{align*}
		\abs{\left(G_{1,k+1}(z)\right)_\nu}&=
		\sum_{\substack{\alpha\in\mathbb{N}_0^n\\1\le\abs{\alpha}
		\le d^{n-1}}}\!\!\!\!\!\!
		a_{k,\nu,\alpha}\left(G(z)\right)_1^{\alpha_1}\dotsm
		\left(G(z)\right)_n^{\alpha_n}\\
		&\le\sum_{i=1}^{d^{n-1}} \frac{\gamma^{k}}{\rho^i}\left(
		\sqrt{n}\right)^iQ(i)C^i
		\rho^i\\
		&\le\gamma^{k+1}.\qedhere
	\end{align*}
\end{proof}

\begin{lemma}\cite{RosayRudin}*{Appendix, Lemma~1~(b)}
\label{th:rra_limit}
	Let \(G\)
	be a lower triangular holomorphic automorphism with diagonal
	elements \(\abs{c_{\nu}}<1\)
	Then we have
	\begin{equation*}
		G_{1,j} \rightrightarrows 0 \text{ on compacts for } j \to \infty.
	\end{equation*}
\end{lemma}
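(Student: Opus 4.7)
The plan is to induct on the coordinate index $\nu \in \{1, \ldots, n\}$, exploiting the lower triangular structure of $G = (g_1, \ldots, g_n)$. Writing $g_1(z) = c_1 z_1$ and $g_\nu(z) = c_\nu z_\nu + h_\nu(z_1, \ldots, z_{\nu-1})$ for $\nu \ge 2$ with $h_\nu(0) = 0$, the composition $G_{1,j+1} = G \circ G_{1,j}$ yields the coordinate recursion
\begin{equation*}
(G_{1,j+1})_\nu = c_\nu \cdot (G_{1,j})_\nu + h_\nu\bigl((G_{1,j})_1, \ldots, (G_{1,j})_{\nu-1}\bigr),
\end{equation*}
with the $h_\nu$-term absent when $\nu = 1$. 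The crucial point is that $h_\nu$ depends only on the first $\nu-1$ coordinates, which is exactly what enables the induction on $\nu$.

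For the base case $\nu = 1$ one simply notes $(G_{1,j}(z))_1 = c_1^j z_1$, which converges to $0$ uniformly on compacts since $|c_1| < 1$. For the inductive step, fix a compact $K \subset \CC^n$ and assume that each of $(G_{1,j})_1, \ldots, (G_{1,j})_{\nu-1}$ converges to $0$ uniformly on $K$. Set $M_j := \sup_K |(G_{1,j})_\nu|$ and $\eta_j := \sup_K |h_\nu\bigl((G_{1,j})_1, \ldots, (G_{1,j})_{\nu-1}\bigr)|$. Continuity of $h_\nu$ together with $h_\nu(0) = 0$ and the inductive hypothesis force $\eta_j \to 0$, while the recursion above gives the inequality $M_{j+1} \le |c_\nu| M_j + \eta_j$.

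It then remains to apply the standard discrete Gronwall-type observation: if a nonnegative sequence satisfies $a_{j+1} \le c \cdot a_j + \eta_j$ with $0 < c < 1$ and $\eta_j \to 0$, then $a_j \to 0$. Iterating from some $j_0$ gives $a_j \le c^{j-j_0} a_{j_0} + \sum_{i=j_0}^{j-1} c^{j-1-i} \eta_i$; given any $\varepsilon > 0$, choosing $j_0$ large enough that $\eta_i < \varepsilon(1-c)/2$ for all $i \ge j_0$ bounds the geometric tail by $\varepsilon/2$, while the first term tends to $0$ as $j \to \infty$. Applying this to $a_j = M_j$ completes the induction. I do not expect any step of this argument to be particularly delicate; the only structural point to verify is that the lower triangular form of $G$ really does decouple the coordinates in the precise way needed, and this is immediate from the definition.
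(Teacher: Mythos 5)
Your proof is correct. The paper gives no proof of this lemma — it is simply cited from Rosay--Rudin (Appendix, Lemma 1(b)) — and the coordinate-by-coordinate induction you run, using the geometric decay $(G_{1,j})_1 = c_1^j z_1$ as the base, propagating smallness downward through the lower-triangular structure via continuity of $h_\nu$ at $0$, and closing each step with the discrete Gronwall bound $a_{j+1} \le |c_\nu|\,a_j + \eta_j$, is the standard argument and in line with the cited source.
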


\begin{lemma}
\label{th:inv_est}
	Let \(G\colon \CC^n\to\CC^n\) be a lower triangular holomorphic automorphism
	with
	\(G=A+H\) where
	\(A:=\diff{0}{G}\)
	and \(H\colon \CC^n\to\CC^n\) is a holomorphic self-map.
	We assume that there exist \(0<s<1\), \(\delta>0\) and \(C\ge 0\) with
	\begin{equation*}
		\norm{A^{-1}}\le\frac{1}{s}
	\end{equation*}
	and
	\begin{equation*}
		\forall\,z\in \ball{\delta}: \norm{H(z)}\le C\norm{z}.
	\end{equation*}
	Then
	\begin{equation*}
		\begin{split}
			\forall\,z\in \ball{\frac{\delta s^{n}}{\sqrt{n}\left(n-1\right)!
			\left(1+C\right)^{n-1}}}:
			\norm{G^{-1}(z)}\le \sqrt{n}\left(n-1\right)!
			\frac{\left(1+C\right)^{n-1}}{s^{n}}\norm{z}.
		\end{split}
	\end{equation*}
\end{lemma}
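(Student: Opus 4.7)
The plan is to exploit the lower triangular structure of \(G\) and to solve \(G(z)=w\) component by component. Writing \(G=(g_1,\dotsc,g_n)\) with \(g_1(z)=c_1 z_1\) and \(g_\nu(z)=c_\nu z_\nu+h_\nu(z_1,\dotsc,z_{\nu-1})\) for \(\nu\ge 2\), the inverse is determined by the recursion
\[
z_\nu=\frac{w_\nu-h_\nu(z_1,\dotsc,z_{\nu-1})}{c_\nu}.
\]
Since \(A=\diff{0}{G}\) is lower triangular with diagonal entries \(c_1,\dotsc,c_n\), the spectrum of \(A^{-1}\) consists of \(1/c_\nu\); the spectral radius inequality \(\max_\nu|c_\nu|^{-1}\le \norm{A^{-1}}\le 1/s\) then yields \(|c_\nu|\ge s\) for every \(\nu\).

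The lower triangularity of \(G\) forces \(H\) to be lower triangular with vanishing first component, so \(h_\nu(z_1,\dotsc,z_{\nu-1})\) equals the \(\nu\)-th component of \(H(z_1,\dotsc,z_{\nu-1},0,\dotsc,0)\). Consequently, whenever the partial vector \((z_1,\dotsc,z_{\nu-1},0,\dotsc,0)\) lies in \(\ball{\delta}\), the hypothesis on \(H\) yields \(|h_\nu(z_1,\dotsc,z_{\nu-1})|\le C\sqrt{|z_1|^2+\dotsb+|z_{\nu-1}|^2}\). The heart of the proof is then an induction on \(\nu\in\{1,\dotsc,n\}\) establishing
\[
|z_\nu|\le\frac{(\nu-1)!(1+C)^{\nu-1}}{s^\nu}\norm{w}
\]
for all \(w\) in the ball of radius prescribed by the lemma. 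The base case \(\nu=1\) is trivial since \(h_1\equiv 0\). For the inductive step, estimate \(\sqrt{\sum_{\mu<\nu}|z_\mu|^2}\le\sum_{\mu<\nu}|z_\mu|\), insert the induction hypothesis, dominate each term of the arithmetic sum by the largest one (using \(s\le 1\)), and combine with the recursive formula above; the resulting step-inequality reduces to \(1\le\nu!(1+C)^{\nu-1}\), which is obvious.

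The domain restriction \(\norm{w}\le\delta s^n/\bigl(\sqrt{n}(n-1)!(1+C)^{n-1}\bigr)\) is calibrated precisely so that the inductive estimate, together with \(\norm{z}\le\sqrt{n}\max_\nu|z_\nu|\), forces \(\sqrt{\sum_{\mu<\nu}|z_\mu|^2}<\delta\) at every step, keeping the bound on \(H\) available throughout the induction. Once the induction is complete, the same inequality \(\norm{z}\le\sqrt{n}\max_\nu|z_\nu|\) delivers the claimed estimate on \(\norm{G^{-1}(w)}\). The main technical obstacle is guessing the right shape for the inductive bound: a naive geometric recursion produces messier constants, and the factorial factor appearing in the statement is exactly the slack that allows the triangle-inequality treatment of the accumulated cross terms to close on itself.
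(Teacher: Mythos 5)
Your recursion $z_\nu = (w_\nu - h_\nu(z_1,\dots,z_{\nu-1}))/c_\nu$ is the right starting point, and the overall inductive shape matches the paper's argument. But there is a genuine gap in the step where you identify $h_\nu$ with the $\nu$-th component of $H$. By the paper's definition of lower triangular, $g_\nu(z) = c_\nu z_\nu + h_\nu(z_1,\dots,z_{\nu-1})$, so $h_\nu$ contains the \emph{entire} off-diagonal part of $g_\nu$, including the linear terms. Since $A = \diff{0}{G}$ is lower triangular but not necessarily diagonal, we have $h_\nu = \ell_\nu + H_\nu$ where $\ell_\nu(z_1,\dots,z_{\nu-1})$ is the off-diagonal linear part of $(Az)_\nu$. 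The hypothesis $\norm{H(z)} \le C\norm{z}$ controls only $H_\nu$, and the hypotheses give no bound on $\ell_\nu$ (they bound $\norm{A^{-1}}$, not $\norm{A}$). So the inequality $|h_\nu| \le C\sqrt{\sum_{\mu<\nu}|z_\mu|^2}$ you rely on is false in general; it holds only in the special case where $A$ is diagonal.

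The paper's proof avoids this trap by never isolating the diagonal entry. It writes $w = A^{-1}(z - H(w))$ and uses that $A^{-1}$ is itself lower triangular, so that its $\nu$-th component satisfies $|A^{-1}_\nu(\zeta)| \le \norm{A^{-1}(\zeta_1,\dots,\zeta_\nu,0,\dots,0)} \le \frac{1}{s}\norm{(\zeta_1,\dots,\zeta_\nu)}$. This folds all of the off-diagonal linear contributions into the operator-norm bound on $A^{-1}$, which is what the lemma actually hypothesizes. Your approach would go through verbatim if you replaced the scalar division by $c_\nu$ with this component bound on $A^{-1}_\nu$ applied to $z - H(w)$; alternatively, you could restrict the statement to diagonal $A$, which is in fact the only case the paper later needs (the lemma is invoked after conjugating by $S$ so that $\widetilde{A}$ is diagonal), but as stated the lemma claims more and your argument does not yet cover it.

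Two smaller points. First, you state that the step-inequality reduces to $1 \le \nu!(1+C)^{\nu-1}$; the exact reduction (after inserting $s\le 1$) is $1 \le (\nu-1)!(1+C)^{\nu-2}/s^{\nu-1}$, which is still clearly true but worth getting right since the factorial indexing is exactly where such arguments tend to slip. Second, your estimate $\sqrt{\sum_{\mu<\nu}|z_\mu|^2}\le\sum_{\mu<\nu}|z_\mu|$ followed by domination of each summand by the largest is fine, and is essentially the same bookkeeping the paper does via $\sqrt{\nu-1}\cdot\max$; either route gives the needed factor $(\nu-1)$.
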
	
\begin{proof}
	Let
	\(A^{-1}=\left(A^{-1}_1,\dotsc,A^{-1}_n\right)\) and
	\(H=\left(H_1,\dotsc,H_n\right)\).
	With \(G\) also \(G^{-1}\) is lower triangular. Therefore \(A^{-1}\) is
	lower triangular.
	Hence for \(\nu\in\left\{1,\dotsc,n\right\}\) we have
	\begin{equation}
	\label{eq:rra_lower_inv_est_1}
		\begin{split}
			\forall\,z\in\mathbb{C}^n:\quad
			\abs{A^{-1}_\nu(z)}&=
			\abs{A^{-1}_\nu\left(z_1,\dotsc,z_\nu,0,\dotsc,0\right)}\\
			&\le
			\norm{A^{-1}\left(z_1,\dotsc,z_\nu,0,\dotsc,0\right)}\\&
			\le
			\frac{1}{s}\norm{\left(z_1,\dotsc,z_\nu,0,\dotsc,0
			\right)}.
		\end{split}
	\end{equation}
	\(G\) is lower triangular and therefore
	\(H=G-\diff{0}{G}\) is a lower triangular map with vanishing diagonal 
	elements.
	Hence for \(\nu\in\left\{1,\dotsc,n\right\}\) we have
	\begin{equation}
	\label{eq:rra_lower_inv_est_2}
		\begin{split}
			\forall\,z\in B_\delta:\quad
			\abs{H_\nu(z)}&=
			\abs{H_\nu\left(z_1,\dotsc,z_{\nu-1},0,\dotsc,0\right)}\\
			&\le
			\norm{H\left(z_1,\dotsc,z_{\nu-1},0,\dotsc,0\right)}\\&
			\le
			C\norm{\left(z_1,\dotsc,z_{\nu-1},0,\dotsc,0
			\right)}.
		\end{split}
	\end{equation}
	
	Let \(z\in\mathbb{C}^n\) with
	\(\norm{z}\le \frac{\delta s^{n}}{\sqrt{n}\left(n-1\right)!
	\left(1+C\right)^{n-1}}\). For \(z=G(w)=A(w)+H(w)\) we have
	\(G^{-1}(z)=w=A^{-1}\left(z-H(w)\right)\).
	By \eqref{eq:rra_lower_inv_est_1} and \eqref{eq:rra_lower_inv_est_2}
	it follows that
	\begin{equation*}
		\abs{w_1}\le\frac{1}{s}\abs{z_1}+0=(1-1)!\frac{\left(
		1+C\right)^{1-1}}{s^1}
		\sqrt{\sum_{\eta=1}^{1}\abs{z_\eta}^2}.
	\end{equation*}
	For \(\nu\in\left\{2,\dotsc,n\right\}\) assume that
	\begin{equation*}
		\abs{w_\mu}\le(\mu-1)!\frac{\left(
		1+C\right)^{\mu-1}}{s^\mu}
		\sqrt{\sum_{\eta=1}^{\mu}\abs{z_\eta}^2}
	\end{equation*}
	for all \(\mu\in\left\{1,\dotsc,\nu-1\right\}\).
	
	Together with
	\eqref{eq:rra_lower_inv_est_1} and
	\eqref{eq:rra_lower_inv_est_2} and noting that \(s<1\) and \(1+C\ge 1\)
	we finally obtain the following estimates: \pagebreak
	\begin{align*}
		\abs{w_\nu} &\le \frac{1}{s}\left(\sqrt{\sum_{\eta=1}^\nu
		\abs{z_\eta}^2}+\sqrt{\sum_{\eta=1}^\nu \abs{H_\eta(w)}^2}\,\right)\\
		&\le \frac{1}{s}\left(\sqrt{\sum_{\eta=1}^\nu
		\abs{z_\eta}^2}+C\sqrt{\nu-1}
		\sqrt{\sum_{\eta=1}^{\nu-1} \abs{w_\eta}^2}\,\right)\\
		&\le\frac{1}{s}\left(\sqrt{\sum_{\eta=1}^\nu
		\abs{z_\eta}^2}+C\left(\nu-1\right)
		\sqrt{
		\!\left(\left(\nu-2\right)!\frac{\left(1+C\right)^{\nu-2}}{s^{\nu-1}}
		\right)^{\!\!2}\sum_{\eta=1}^{\nu-1} \abs{z_\eta}^2}\,\right)\\
		&\le\frac{1}{s}\left(\sqrt{\sum_{\eta=1}^\nu
		\abs{z_\eta}^2}+C\left(\nu-1\right)!
		\frac{\left(1+C\right)^{\nu-2}}{s^{\nu-1}}
		\sqrt{
		\sum_{\eta=1}^{\nu} \abs{z_\eta}^2}\,\right)\\
		&\le \left(\nu-1\right)!
		\frac{\left(1+C\right)^{\nu-2}}{s^{\nu}}\left(
		\left(1+C\right)\sqrt{\sum_{\eta=1}^{\nu} \abs{z_\eta}^2}\,\right)\\
		&=\left(\nu-1\right)!
		\frac{\left(1+C\right)^{\nu-1}}{s^{\nu}}
		\sqrt{\sum_{\eta=1}^{\nu} \abs{z_\eta}^2}
	\end{align*}
	In particular
	we have shown the desired estimate by induction.
\end{proof}

\begin{proof}[Proof of Theorem \ref{th:main}]
	We use Proposition \ref{pp:rr_biholo}.
	W.l.o.g. let \(z_0=0\).
	
	It is easy to see that every eigenvalue \(\lambda\) of \(\diff{0}{f_{j}}\)
	satisfies
	\begin{equation}
	\label{eq:rra_rra_rra_ew}
		0<s\le \abs{\lambda}\le r<1.
	\end{equation}
	For \(j\in\NN\) Lemma \ref{th:rra_3}
	gives us a unitary linear map \(S_{j}\), a lower triangular
	automorphism
	\(\widetilde{G}_{j}\) and holomorphic maps \(T_{j}^m\) with
	\begin{equation}
	\label{eq:rra_rra_rra_maps_est}
		S_{j}\circ \widetilde{G}_{j}^{-1}\circ S_{j}^{-1}\circ T_{j}^m
		\circ f_{j}-T_{j}^m\in
		\landau{m}.
	\end{equation}
	
	If \(q\ge 2\), we have (according to Lemma \ref{th:rra_3})
	by \eqref{eq:der_equ}
	that
	all \(S_{j}=:S\) are identical.
	For \(j\in\mathbb{N}\) we define
	\begin{equation}
	\label{eq:rra_rra_rra_def}
		G_j := S\circ\widetilde{G}_j\circ S^{-1}
	\end{equation}
	which fulfills \eqref{eq:rra_rra_maps_0} and
	\eqref{eq:rra_rra_maps_diff} of Proposition \ref{pp:rr_biholo}.
	If \(q\ge p\), we have (again, according to Lemma \ref{th:rra_3})
	by
	\eqref{eq:rra_rra_rra_ew} that all
	\(\widetilde{G}_j\) are identical.
	With \(\widetilde{G}_j\) also \(\widetilde{G}_j^{-1}\) is
	lower triangular. Hence (noting that \(S\) is unitary)
	Lemma \ref{th:rra_bounded} gives us \(0<\tilde{\rho}<1\) and
	\(\gamma>0\) s.t.
	\begin{equation*}
		G_{j+1,k}^{-1}\left(\poly{\frac{\rho}{\sqrt{n}}}\right)\subset
		\Delta_{\gamma^{k-j}}
	\end{equation*}
	for \(j>k\).
	Application of the Schwarz--Pick Lemma then gives us some \(\rho > 0\) and
	\(a>0\) s.t. \eqref{eq:rra_rra_maps_sp} holds
	(with \(\gamma>0\) from above).
	By Lemma \ref{th:rra_limit}  and
	\eqref{eq:rra_rra_rra_ew} we get that
	\eqref{eq:rra_rra_maps_conv} is fulfilled.
	If \(\diff{0}{f_{j}}\) is normal, then the application of
	Lemma \ref{th:rra_3}
	gives a linear estimate for \(\widetilde{G}_{j}\) and
	\(\deg \widetilde{G}_j\le p-1\).
	Lemma \ref{th:inv_est} then gives a linear estimate for
	\(\widetilde{G}_{j}^{-1}\) and it is easy to see that
	we have \(\deg \widetilde{G}_{j}^{-1}\le \left(p-1\right)^{n-1}\).
	Hence by the application of
	Lemma \ref{th:rra_bounded} we see that we may choose
	\begin{equation}
	\label{eq:rra_rra_est_q}
		\gamma=\mathop{\mathlarger{\mathlarger{\mathlarger{\sum}}}}_{i=1}^{\left(p-1\right)^{\left(n-1\right)^2}}
\!\!\!\!		
		\left(\multii{i}\sqrt{n}\right)^i
		\left(
			\sqrt{n}\left(n-1\right)!\frac{
				\left(
					1+C
				\right)^{n-1}
			}
			{
				s^{n}
			}
		\right)^i
	\end{equation}
	where
	\begin{equation}
	\label{eq:rra_rra_rra_est_C}
		C=r\sum_{m=2}^{p-1}n^2\multiisq{m}\left(
			\frac{\sqrt{n}}{\min\left\{1,\delta\right\}}
		\right)^m.
	\end{equation}
	
	We choose \(q\) (\(\ge p\)) large enough to satisfy
	\(r^q\gamma<1\). Then we define
	\begin{equation*}
		T_j := T_j^q.
	\end{equation*}
	This fulfills \eqref{eq:rra_rra_maps_0} and 
	\eqref{eq:rra_rra_maps_diff} of Proposition \ref{pp:rr_biholo}. By
	\eqref{eq:der_equ} we have (according to Lemma \ref{th:rra_3})
	that all \(T_{j}\) are identical. Hence
	\eqref{eq:rra_rra_maps_bound} and \eqref{eq:rra_rra_maps_inj}
	hold.
	All maps in \eqref{eq:rra_rra_rra_maps_est}
	are uniformly bounded (on \(\ball{\delta}\)) and hence
	\eqref{eq:rra_rra_maps_est} is fulfilled.

	The theorem now follows from Proposition \ref{pp:rr_biholo}.
	If \(\diff{0}{F_{j}}\)
	is normal, the desired estimate
	follows from \(r^q\gamma<1\) by
	\eqref{eq:rra_rra_est_q}.
	If in addition \(n=2\), the desired estimate 
	follows from Remark \ref{th:rem_est}.
	In both cases the estimates satisfy \(q\ge p\ge 2\) which is
	needed above.
\end{proof}

\begin{remark}
\label{rm:onlyspecial}
The goal in the proof above is to make sure all \(G_j\) and all
\(T_j\) are identical. The assumptions in Theorem \ref{th:main} are
one way to achieve this. There are other possibilities:
\begin{enumerate}
\item If all derivatives of \(f_j\) up to order \(q-1\)
(like in the proof above)
are special elements with respect to
\(\diff{z_0}{f_j}\), Lemma \ref{th:rra_3} assures \(T_j^m=\id\).
Then we need the assumption \eqref{eq:der_equ}
of Theorem \ref{th:main} just for multi-indices up
to order \(p-1\) in order to get \(G_1=G_2=\dotsb\) (according to Lemma
\ref{th:rra_3}).

\item If we have \(\specialp{m}{A_j}=0\) for all
\(m\in\left\{2,\dotsc,p-1\right\}\) and \(A_j:=\diff{z_0}{f_j}\),
Lemma \ref{th:rra_3} gives us \(G_j=A_j\).
Then we may choose (in the proof above) \(\gamma=\frac{1}{s}\)
and therefore \(q=p\).
\end{enumerate}

\end{remark}

\begin{bibdiv}
\begin{biblist}

\bib{DixonEsterle}{article}{
   author={Dixon, P. G.},
   author={Esterle, J.},
   title={Michael's problem and the Poincar\'e-Fatou-Bieberbach phenomenon},
   journal={Bull. Amer. Math. Soc. (N.S.)},
   volume={15},
   date={1986},
   number={2},
   pages={127--187},
   issn={0273-0979},
   doi={10.1090/S0273-0979-1986-15463-7},
}

\bib{Fornaess}{article}{
   author={Forn\ae ss, John Erik},
   title={Short $\mathbb C^k$},
   conference={
      title={Complex analysis in several variables---Memorial Conference of
      Kiyoshi Oka's Centennial Birthday},
   },
   book={
      series={Adv. Stud. Pure Math.},
      volume={42},
      publisher={Math. Soc. Japan, Tokyo},
   },
   date={2004},
   pages={95--108},
}

\bib{FornaessStensones}{article}{
   author={Forn{\ae}ss, John Erik},
   author={Stens{\o}nes, Berit},
   title={Stable manifolds of holomorphic hyperbolic maps},
   journal={Internat. J. Math.},
   volume={15},
   date={2004},
   number={8},
   pages={749--758},
   issn={0129-167X},
   doi={10.1142/S0129167X04002557},
}

\bib{PetersSmit}{article}{
   author={Peters, Han},
   author={Smit, Iris Marjan},
   title={Adaptive trains for attracting sequences of holomorphic
   automorphisms},
   journal={Complex Anal. Synerg.},
   volume={1},
   date={2015},
   number={1},
   pages={Paper No. 3, 36},
   issn={2197-120X},
   doi={10.1186/2197-120X-1-3},
}

\bib{Peters}{article}{
   author={Peters, Han},
   title={Perturbed basins of attraction},
   journal={Math. Ann.},
   volume={337},
   date={2007},
   number={1},
   pages={1--13},
   issn={0025-5831},
   doi={10.1007/s00208-005-0739-y},
}

\bib{Reich}{article}{
   author={Reich, L.},
   title={Normalformen biholomorpher Abbildungen mit anziehendem Fixpunkt},
   language={German},
   journal={Math. Ann.},
   volume={180},
   date={1969},
   pages={233--255},
   issn={0025-5831},
}

\bib{RosayRudin}{article}{
   author={Rosay, Jean-Pierre},
   author={Rudin, Walter},
   title={Holomorphic maps from ${\bf C}^n$ to ${\bf C}^n$},
   journal={Trans. Amer. Math. Soc.},
   volume={310},
   date={1988},
   number={1},
   pages={47--86},
   issn={0002-9947},
   doi={10.2307/2001110},
}

\bib{SabiiniThesis}{thesis}{
   author={Sabiini, Guitta},
   title={Les suites de contractions holomorphes et domaines de Fatou--Bieberbach},
   type={PhD Thesis},
   organization={Universit\'e Toulouse III -- Paul Sabatier},
   date={2010},
   eprint={http://thesesups.ups-tlse.fr/804/1/Sabiini_Guitta.pdf},
}

\bib{Sabiini}{article}{
   author={Sabiini, Guitta},
   title={Cha\^\i nes de contractions holomorphes et domaines de
   Fatou-Bieberbach},
   language={French, with French summary},
   journal={Ann. Mat. Pura Appl. (4)},
   volume={195},
   date={2016},
   number={3},
   pages={857--879},
   issn={0373-3114},
   doi={10.1007/s10231-015-0494-y},
}

\bib{Sternberg}{article}{
   author={Sternberg, Shlomo},
   title={Local contractions and a theorem of Poincar\'e},
   journal={Amer. J. Math.},
   volume={79},
   date={1957},
   pages={809--824},
   issn={0002-9327},
   doi={10.2307/2372437},
}

\bib{WoldFB}{article}{
   author={Wold, Erlend Forn{\ae}ss},
   title={Fatou-Bieberbach domains},
   journal={Internat. J. Math.},
   volume={16},
   date={2005},
   number={10},
   pages={1119--1130},
   issn={0129-167X},
   doi={10.1142/S0129167X05003235},
}

\end{biblist}
\end{bibdiv}

\end{document}